\newtheorem{lemma}{Lemma}
\newtheorem{example}{Example}
\newtheorem{corollary}{Corollary}
\newtheorem{definition}{Definition}
\newtheorem{proposition}{Proposition}
\newtheorem{remark}{Remark}
\newenvironment{preuve}{\vip \noindent {\it Proof}}{\hfill$\square$\vip}
\newcommand{\vip}{\vskip.2cm}
\newcommand{\COMMENTAIRE}[1]{}
\newcommand\numberthis{\addtocounter{equation}{1}\tag{\theequation}}
\def\restriction#1#2{\mathchoice
              {\setbox1\hbox{${\displaystyle #1}_{\scriptstyle #2}$}
              \restrictionaux{#1}{#2}}
              {\setbox1\hbox{${\textstyle #1}_{\scriptstyle #2}$}
              \restrictionaux{#1}{#2}}
              {\setbox1\hbox{${\scriptstyle #1}_{\scriptscriptstyle #2}$}
              \restrictionaux{#1}{#2}}
              {\setbox1\hbox{${\scriptscriptstyle #1}_{\scriptscriptstyle #2}$}
              \restrictionaux{#1}{#2}}}
\def\restrictionaux#1#2{{#1\,\smash{\vrule height .8\ht1 depth .85\dp1}}_{\,#2}}
\begin{document}

\title[Control the difference between two Brownian motions with dynamic copulae]{On the control of the difference between two Brownian motions: a dynamic copula approach}

\author{Thomas Deschatre}

\address{Thomas Deschatre, CEREMADE,
Universit\'e Paris-Dauphine, Place du mar\'echal De Lattre de Tassigny
75775 Paris Cedex 16, France.}

\email{thomas.deschatre@gmail.com}

\begin{abstract} We propose new copulae to model the dependence between two Brownian motions and to control the distribution of their difference. Our approach is based on the copula between the Brownian motion and its reflection. We show that the class of admissible copulae for the Brownian motions are not limited to the class of Gaussian copulae and that it also contains asymmetric copulae. These copulae allow for the survival function of the difference between two Brownian motions to have higher value in the right tail than in the Gaussian copula case. Considering two Brownian motions $B_t^1$ and $B_t^2$, the main result is that the range of possible values for $\mathbb{P}\left(B_t^1-B^2_t \geq \eta\right)$ with $\eta > 0$ is the same for Markovian pairs and all pairs of Brownian motions, that is $\left[0,2\Phi\left(\frac{-\eta}{2\sqrt{t}}\right)\right]$ with $\Phi$ being the cumulative distribution function of a standard Gaussian random variable.  
\end{abstract}

\maketitle

\textbf{Mathematics Subject Classification (2010)}: 60J25, 60J60, 60J65, 60J70, 60H10, 62H99.

\textbf{Keywords}: Brownian motion, Copula, Markovian diffusions, Asymmetric, Difference, Coupling, Risk.


\section{Introduction}

\subsection{Motivation}

Modeling dependence between risks has become an important problem in insurance and finance. An important application in risk management for commodity energy markets is the pricing of multi-asset options, and in particular the pricing of spread options.  Spread options are used to model the incomes of a plant, such as coal plant. A review on the spread options and on the pricing and hedging models is done by Carmona \citep{carmona03}. The simplest model used for derivative pricing and hedging on several underlying is the multivariate Black and Scholes model \citep{carmona05}. Each price is modeled by a geometric Brownian motion and the dependence between the different Brownian motions is modeled by a constant correlation matrix. The copula between the Brownian motions when they are linked by correlation is called a Gaussian copula. Copulae have many applications in finance and insurance, especially in credit derivative modeling. For instance, Li \citep{li99} used the Gaussian copula to model the dependence between time until default of different financial instruments. For more information on the use of copulae in finance, the reader can refer to \citep{cherubini04}.

\medskip
Let $X_t$ be the price of electricity at time $t$, $Y_t$ the price of coal and $H$ the heat rate (conversion factor) between the two. The income of the coal plant at time $t$ can be modeled by $(X_t - H Y_t - K)^+$ where $K$ is a constant and corresponds to a fixed cost (we have neglected the price of carbon emissions). Coal is a combustible used to produce electricity and $H$ is the cost of one unit of coal used to produce one unit of electricity. Thus we expect to have $X_t > H Y_t$, i.e. the price of electricity greater than the price of the coal used to produce it, with a probability greater than $\frac{1}{2}$. Let us consider that the two commodities are modeled by an arithmetic Brownian motion with a zero drift under a risk neutral probability $\mathbb{P}$: $X_t = \sigma^X B^1_t$ and $H Y_t = \sigma^Y B^2_t$ and we suppose that $\langle dB^1, dB^2\rangle_t = \rho dt$. The dependence between the two Brownian motions is modeled by a correlation, i.e a Gaussian copula. For $x \in \mathbb{R}$, we have 
\[\mathbb{P}\left( X_t - H Y_t \geq x\right) =  \mathbb{P}\left( X_t - H Y_t \leq -x\right)\]
and then, if $x \geq 0$, 
\[\mathbb{P}\left( X_t - H Y_t \geq x\right) \leq \frac{1}{2}.\]
The distribution of the difference between the two prices is symmetric and moreover, the value of its survival function is limited to $\frac{1}{2}$ in the right tail. We would like to have higher values for this probability in order to enrich our modeling. The modeling of the dependence with a constant correlation does not allow to capture the asymmetry in the distribution of the difference of the prices and limits the values that can be achieved by its survival function. Today, it is common practice to use a factorial model \citep{benth08} to model prices of commodities which is based on Brownian motions. Marginal models, i.e. when we consider only one commodity at the time, are enough performant for risk management. However, the dependence between them is modeled by a Gaussian copula, which is not enough to capture the asymmetry and the values taken by the survival function of their difference. Sklar's Theorem \citep{sklar59} states that the structure of dependence can be separated from the modeling of the marginals with the copula. Studying the impact of the structure of dependence on the modeling is equivalent to studying the impact of the copula.

\medskip
Whereas copulae are very useful in a static framework where random variables are modeled, modeling with copulae is much more difficult in a dynamic framework, that is when processes are involved. In a discrete time framework, Patton \citep{patton06} introduces the conditional copula which is a copula at time $t$ defined conditionally on the information at time $t-1$. Fermanian and Wegkamp \citep{fermanian04} generalize the concept of conditional copula. In a continuous time framework, Darsow et al. \citep{darsow92} consider the modeling of the time dependence by a copula. They give sufficient and necessary conditions for a copula to be the copula of a Markov process $X = \left(X_t\right)_{t \geq 0}$ between times $t$ and $s$, i.e. the copula of $\left(X_t, X_s\right)$, using the Chapman-Kolmogorov equation. We are more interested in the space dependence, that is the dependence between two different processes at a given time $t$. The question is studied by Jaworski and Krzywda \citep{jaworski13}. They consider two Brownian motions and they are interested in copulae that make the bivariate process self-similar. They find necessary and sufficient conditions for the copula to be suitable for the Brownian motions deriving the Kolmogorov forward equation. The copula is linked by a local correlation function into a partial derivative equation. Further work has been done in the thesis of Bosc \citep{bosc12} where there are no constraints of self-similarity and it is not only limited to Brownian motions ; a more general partial derivative equation is found. More details about their work are given in Section \ref{browniancopula}. However, conditions for the copula to be suitable for the Brownian motions are very restrictive. An equivalent approach to the copula one is the coupling approach. A coupling of two stochastic processes is a bi-dimensional measure on the product space such that the marginal measures correspond to the ones of the stochastic processes. For more information on coupling, the reader can refer to \citep{chen89}. One of the most important coupling is the coupling by reflection \citep{lindvall86}, based on the reflection of the Brownian motion which has some importance in this article.

\subsection{Objectives and results}

The objective of this article is to control the distribution of the difference between two Brownian motions at a given time $t$. The distribution of the difference between two Brownian motions $B^1$ and $B^2$ can be described by $x \mapsto \mathbb{P}\left(B^1_t - B^2_t \geq x \right)$, $x \in \mathbb{R}$, $t \geq 0$. If $B^1_t-B^2_t$ has a continuous cumulative distribution function, this function is the survival function of $B^1_t - B^2_t$ at point $x$. In particular, we want to find asymmetric distributions for $B^1-B^2$ with more weight in the positive part than in the Gaussian copula case, i.e. $\mathbb{P}\left(B^1_t - B^2_t \geq \eta \right)$ greater than $\frac{1}{2}$ for a given $\eta > 0$. Since distributions of $B^1_t$ and $B^2_t$ are known, we control this distribution with the copula of $\left(B^1_t,B^2_t\right)$. One of the main issues is to work in a dynamical framework ; we then first need to extend the definition of copulae to Markovian diffusions. If we denote by $\mathcal{C}_B$ the set of admissible copulae for Brownian motions, which is properly defined in Section \ref{browniancopula}, our main goal is to study the range of the function 
\[\begin{array}{ccccc}
S_{\eta,t} & : & \mathcal{C}_B & \to & \left[0,1\right] \\
 & & C & \mapsto & \mathbb{P}_{C}\left(B^1_t - B^2_t \geq \eta\right) \\
\end{array}
\]
denoted by $Ran\left(S_{\eta,t}\right)$ with $\mathbb{P}_C$ the probability measure associated to $\left(B^1,B^2\right)$ when $C \in \mathcal{C}_B$ and with $\eta > 0$ and $t \geq 0$ given. Our problem is related to the one consisting in finding bounds of $\mathbb{P}\left(X + Y < \eta\right)$ with $X$ and $Y$ two random variables with given distributions, see \cite[Section 6.1]{nelsen06} and \citep{frank87, ruschendorf82, makarov82}. In our case, we add the constraint that the copulae are in $\mathcal{C}_{B}$.  

\medskip
Considering the set of Gaussian copulae, it is easy to prove that $\left[0,\Phi\left(\frac{-\eta}{2\sqrt{t}}\right)\right] \subset Ran\left(S_{\eta,t}\right)$ by controlling the correlation between the two Brownian motions with $\Phi$ the cumulative distribution function of a standard normal random variable. Furthermore, if we consider the restriction of $S_{\eta,t}$ to the set of Gaussian copulae $\restriction{S_{\eta,t}}{C^d_G}$, we have $Ran\left(\restriction{S_{\eta,t}}{C^d_G}\right) =  \Bigl[0,\Phi\left(\frac{-\eta}{2\sqrt{t}}\right)\Bigr]$, see Proposition \ref{boundcopuladynamic} (i) below.

\medskip
Our major contribution is to construct a family of dynamic copulae in $\mathcal{C}_B$ that can achieve all the values between 0 and the supremum of $S_{\eta,t}$ on $\mathcal{C}_B$.  We first prove that  
\[\underset{C \in \mathcal{C}_B}{\sup \;}  S_{\eta,t}\left(C\right) =  2\Phi\Bigl(\frac{-\eta}{2\sqrt{t}}\Bigr)\]
in Proposition \ref{boundcopuladynamic} (ii), implying that the Gaussian copulae can not describe all the values that can be achieved by $S_{\eta,t}$. This supremum is achieved with the copula of the Brownian motion and its reflection, which we call the Reflection Brownian Copula, and which a closed formula is given in Proposition \ref{copulareflection}. In the case where $X$ and $Y$ are two normal random variables $\mathcal{N}\left(0,\sqrt{t}\right)$, the supremum of $\mathbb{P}\left(X-Y \geq \eta \right)$ without constraint on the copula is also equal to $2\Phi\Bigl(\frac{-\eta}{2\sqrt{t}}\Bigr)$, see Proposition \ref{boundcopula} (ii) below that follows from \cite[Section 6.1, Example 6.2]{nelsen06}. The supremum for $\mathbb{P}_{C}\left(B^1_t - B^2_t \geq \eta\right)$ is then the same for Markovian pairs and all pairs of Brownian motions. Deriving a new family of copulae that is described in Proposition \ref{randombarrier} from the Reflection Brownian Copula, it is possible to achieve all the value between 0 and $2\Phi\Bigl(\frac{-\eta}{2\sqrt{t}}\Bigr)$, which means that 
\[Ran\left(S_{\eta,t}\right) =  \Bigl[0,2\Phi\Bigl(\frac{-\eta}{2\sqrt{t}}\Bigr)\Bigr];\]
this is the result of Proposition \ref{boundcopuladynamic} (iii). The range of possible values for $\mathbb{P}_{C}\left(B^1_t - B^2_t \geq \eta\right)$ is the same for Markovian pairs and all pairs of Brownian motions. Copulae used to achieve values in $Ran\left(S_{\eta,t}\right)$ present two states depending on the value of $B^1_t - B^2_t$: one of positive correlation and one of negative one. These copulae are asymmetric and to our knowledge, these are the only asymmetric copulae suitable for Brownian motions available in the literature.  

\subsection{Structure of the paper}

In Section \ref{markovcopulae}, we define the notion of dynamic copulae for Markovian diffusion processes and in particular for the case of two Brownian motions. We show that our definition includes several model of dependence present in the literature such as stochastic correlation models. In Section \ref{reflectionbrowniancopula}, we compute a copula called the Reflection Brownian Copula based on the dependence between a Brownian motion and its reflection and we derive new families of asymmetric copulae based on this copula. In Section \ref{spreaddistributioncontrol},  after showing the limitations of modeling the dependence between two random variables with symmetric copulae, we establish the results on the range of the function $S_{\eta,t}$, first in a static framework and then in a dynamical framework with Brownian motions.

\section{Markov Diffusion Copulae}
\label{markovcopulae}

In finance and insurance, modeling of two dimensional processes is usually based on a 2 dimensional Brownian motion, that is when the structure of dependence between two 1 dimensional Brownian motions is modeled by a correlation. The copula of the two Brownian motions at a given time then belongs to the class of Gaussian copulae.

\medskip
Let us recall that a function $C: \left[0,1\right]^2 \mapsto \left[0,1\right]$ is a copula if:
\begin{enumerate}
\item[(i)] $C$ is 2-increasing, i.e. $C\left(u_2,v_2\right) - C\left(u_1,v_2\right) + C\left(u_1,v_1\right) - C\left(u_2,v_1\right) \geq 0 \text{ for }  u_2 \geq u_1, v_2 \geq v_1$ and $u_1, u_2, v_1, v_2 \in \left[0,1\right]$,
\item[(ii)]  $C\left(u,0\right) = C\left(0,v\right) = 0$, $u, v \in \left[0,1\right]$,
\item[(iii)]  $C\left(u,1\right) = u, C\left(1,u\right) = u$,  $u \in \left[0,1\right]$.
\end{enumerate}
We denote by $\mathcal{C}$ the set of copulae and by $\mathcal{C}_G$ the set of Gaussian copulae completed by the upper and lower Frechet copulae $M\left(u,v\right) = \min\left(u,v\right)$ and $W\left(u,v\right) = \max \left(u+v-1,0\right)$ corresponding to the limit cases $\rho = 1$ and $\rho = -1$. $\mathcal{C}_G = \{ C \in \mathcal{C} : \exists \rho \in \left(-1,1\right), C = C_{G,\rho}\} \cup \{M,W\}$ where $C_{G,\rho}$ denote the Gaussian copula with parameter $\rho$. We have \[C_{G,\rho}(u,v) = \Phi_{\rho}(\Phi^{-1}(u),\Phi^{-1}(v))\] with $\Phi$ the cumulative distribution function of a standard normal random variable and $\Phi_\rho$ the cumulative distribution function of a bivariate normal random variable with correlation $\rho$:
\[\Phi_{\rho}(x,y) = \int_{-\infty}^{y} \int_{-\infty}^{x} \frac{1}{2 \pi \sqrt{1-\rho^2}}e^{-\frac{1}{2(1-\rho^2)}(u^2+v^2-2\rho u v)}du dv.\]
In the following, a Gaussian copula will abusively refer to an element of $\mathcal{C}_G$.

\medskip
In this section, we want to generalize the concept of copula which is adapted for random variables to a dynamical framework. We want to define the notion of copula for Markov diffusions in Section \ref{defmarkovcopula}. In particular, we are interested in copulae suitable for Brownian motions in Section \ref{browniancopula}.

\subsection{Definition}
\label{defmarkovcopula}

\medskip
In order to work in a dynamical framework, we need to extend the concept of copula to Markovian diffusions. Our definition is based on the work of Bielecki et al. \citep{bielecki08} and gives a more general definition.

\medskip
We recall that if $P = \left(P_t\right)_{t \geq 0}$ is a Markovian diffusion solution of the stochastic differential equation 
\[dP_{t} = \mu\left(P_t\right)dt + \sigma\left(P_{t}\right) dW_t,\]
with $W = \left(W_t\right)_{t \geq 0}$ a standard Brownian motion, the infinitesimal generator $\mathcal{L}$ of $P$ is the operator defined by  
\[\mathcal{L}f\left(x\right) = \frac{1}{2}\sigma^2\left(x\right) f''\left(x\right) + \mu\left(x\right)f'\left(x\right)\]
for $f$ in a suitable space of functions including $\mathcal{C}^2$.

\begin{definition}[Admissible copula for Markovian diffusions] \label{admissiblecopula} We say that a collection of copula $C = \left(C_t\right)_{t \geq 0}$ is an admissible copula for the $n$ real valued Markovian diffusions, $n \geq 2$, $\left(X^i\right)_{ 1 \leq i \leq n }$ defined on a common probability space $\left(\Omega, \mathcal{F}, \mathbb{P}\right)$ if there exists a $\mathbb{R}^m$ Markovian diffusion $Z = \left(Z^i\right)_{1 \leq i \leq m}$, $m \geq n$, defined on a probability extension of $\left(\Omega, \mathcal{F}, \mathbb{P} \right)$ such that 
\[
\left\lbrace
\begin{array}{c}
\mathcal{L}\left(Z^i\right) = \mathcal{L}\left(X^i\right), 1 \leq i \leq n,\\
Z^i_0 = X^i_0, 1 \leq i \leq n, \\
\text{for } t \geq 0, \text{ the copula of } \left(Z_t^i\right)_{1 \leq i \leq n} \text{ is } C_t.
\end{array}\right.
\]
\end{definition}

\medskip
The strongest constraint to be admissible is that $Z$ has to be a Markovian diffusion. Without this constraint, all the copulae are admissible. Sempi \citep{sempi10} studies the Brownian motions linked by a copula without this constraint. Definition \ref{admissiblecopula} is consistent with the approach of \citep{jaworski13} or \citep{bosc12} consisting of modeling dependence by a local correlation function. However, our approach is totally different. 

\subsection{Brownian motion case}
\label{browniancopula}

\medskip
From now on, we work in a 2 dimensional framework and we denote by $\mathcal{C}_B$ the set of admissible copulae for Brownian motions, that is when $X^1$ and $X^2$ are Brownian motions. The only well known suitable copulae for Brownian motion are the Gaussian copulae.

\medskip
We can extend the definition of $\mathcal{C}_G$ to a dynamical framework by defining 
\[\mathcal{C}^d_G = \{(C_t)_{t \geq 0} : \; \forall t \in \mathbb{R}^+, \; C_t \in \mathcal{C}_G\} \cap \mathcal{C}_B.\]
It is necessary to take the intersection with $\mathcal{C}_B$ because we do not know if conditions are needed on $C_t$ for the copula to be admissible. We are not interested in this question in this paper. However, we know this intersection is not empty because $\{(C_t)_{t \geq 0} : \exists \rho \in \left(-1,1\right), \; \forall t \in \mathbb{R}^+ \; C_t = C_{G,\rho}\} \subset \mathcal{C}_B$. One of our objective is to find copulae that are admissible for Brownian motion but that are not Gaussian copulae.

\medskip
Jaworski and Krzywda \citep{jaworski13} prove that the set of admissible copulae for Brownian motions was not reduced to the Gaussian copulae. By linking local correlation and copula with the Kolmogorov backward equation, they find that a sufficient condition to be admissible is 
\begin{equation}
\label{pdecopula}
\left| \frac{1}{2} e^{\frac{\Phi^{-1}\left(v\right)^2-\Phi^{-1}\left(u\right)^2}{2}}  \frac{\partial^{2}_{u,u} C\left(u,v\right)}{\partial^{2}_{u,v} C\left(u,v\right)} + \frac{1}{2} e^{\frac{\Phi^{-1}\left(u\right)^2-\Phi^{-1}\left(v\right)^2}{2}}  \frac{\partial^{2}_{v,v} C\left(u,v\right)}{\partial^{2}_{u,v} C\left(u,v\right)} \right| < 1 \quad \forall (t,u,v) \in \mathbb{R^+} \times \left[0,1\right]^2
 \end{equation}
when the copula does not depend on time. In particular, they prove that the extension of the FGM copula $C^{FGM}\left(u,v\right) = uv\left(1+a\left(1-u\right)\left(1-v\right)\right), a \in \left[-1, 1\right]$ in a dynamical framework defined by $C_t\left(u,v\right) = C^{FGM}\left(u,v\right), \; t \geq 0$, is an admissible copula for Brownian motions. Bosc \citep{bosc12} has also found admissible copulae.

\bigskip
Let us consider two independent Brownian motions $B^1$ and $Z$ defined on a common probability space $\left(\Omega, \mathcal{F}, \mathbb{P}\right)$. Definition \ref{admissiblecopula} includes several models for Brownian motions used in the literature.

\medskip
\paragraph{ \it Deterministic correlation}  Let us consider a function $t \mapsto \rho\left(t\right)$ defined on $\mathbb{R}^+$ with values in $\left[-1,1\right]$.
\smallskip
Let $B_t^2 = \int_0^t \rho\left(s\right) dB_s^1 + \int_0^t \sqrt{1-\rho\left(s\right)^2} dZ_s$.

\smallskip
$B^2$ is a Brownian motion and the dynamic copula defined at each time $t$ by the copula of $\left(B^1_t,B^2_t\right)$ is in $\mathcal{C}_B$.

\medskip
\paragraph{\it Local correlation} Let us consider a function $\left(x,y\right) \mapsto \rho\left(x,y\right)$ defined on $\mathbb{R}^+$ with values in $\left[-1,1\right]$ and measurable. If the stochastic differential equation 
\[dB^2_s = \rho\left(B^1_t,B^2_t\right)dB^1_s + \sqrt{1-\rho\left(B^1_t,B^2_t\right)^2}dZ_s\]
has a strong solution, the dynamic copula defined at each time $t$ by the copula of $\left(B^1_t,B^2_t\right)$ is in $\mathcal{C}_B$ by the L\'evy characterization of Brownian motion.

\medskip
\paragraph{\it Stochastic correlation} Let us consider a Markovian diffusion $\rho = \left(\rho_s\right)_{s \geq 0}$ independent of $\left(B^1,Z\right)$ locally square integrable and with values in $\left[-1,1\right]$. 

\smallskip
We can extend the probability space and the filtration generated by $\left(B^1,Z\right)$. The stochastic process $B^2$ defined by $B_t^2 = \int_0^t \rho\left(s\right) dB_s^1 + \int_0^t \sqrt{1-\rho\left(s\right)^2} dZ_s$ is a Brownian motion and the dynamic copula defined at each time $t$ by the copula of $\left(B^1_t,B^2_t\right)$ is in $\mathcal{C}_B$.

\smallskip
We can also consider a correlation diffusion driven by $B^1$, $Z$ and an independent Brownian motion. If the system of stochastic differential equations has a strong solution, the copula is still in $\mathcal{C}_B$.

\medskip
Contrary to the approaches of Jaworski and Krzywda \citep{jaworski13}, Bosc \citep{bosc12} or Bielecki et al. \citep{bielecki08}, Definition \ref{admissiblecopula} includes stochastic correlation models. However, we need for the stochastic correlation to be a Markovian diffusion which is not needed in a general case ; the stochastic correlation has only to be progressively measurable.

\section{Reflection Brownian Copula}
\label{reflectionbrowniancopula}

In this section, our objective is to construct Markov Diffusion Copulae defined in Section \ref{markovcopulae}. We construct a new copula based on the reflection of the Brownian motion. We show that the copula between the Brownian motion and its reflection is adapted to a dynamical framework and is a suitable copula for Brownian motions. Furthermore, we give a closed formula of this copula in Section \ref{closedformula}. To our knowledge, this copula has not been studied in detail and it is the new copula suitable for Brownian motions. We also construct new families of copulae by extension of the Reflection Brownian Copula in Section \ref{extensions}. 

\subsection{Closed formula for the copula}
\label{closedformula}

\medskip
In this section, we study the copula between the Brownian motion and its reflection. Since its reflection is also a Brownian motion, the copula is a good candidate for being in $\mathcal{C}_B$. 
 
\bigskip
Let us consider a filtered probability space ($\Omega$, $\mathcal{F}$, $\left(\mathcal{F}_t\right)_{t \geq 0}$, $\mathbb{P}$) with $\left(\mathcal{F}_t\right)_{t \geq 0}$ satisfying the usual hypothesis (right continuity and completion) and $B = \left(B_t\right)_{t \geq 0}$ a Brownian motion adapted to $\left(\mathcal{F}_t\right)_{t \geq 0}$. We denote by $\tilde{B}^{h}$ the Brownian motion reflection of $B$ on $x = h$ with $h \in \mathbb{R}$, i.e. $\tilde{B}^{h}_t =  - B_t + 2 (B_t - B_{\tau^h}){\bf1}_{t \geq \tau^h}$ with $\tau^h =  \inf \{t  \geq 0 : B_t = h \}$. Thus, $\tilde{B}^{h}$ is a $\mathcal{F}$ Brownian motion according to the reflection principle (see \cite[Theorem\ 3.1.1.2, p.\ 137]{jeanblanc09}). Proposition \ref{copulareflection} gives the copula of $\bigl(B, \tilde{B}^{h}\bigr)$.

\medskip
We recall that $M\left(u,v\right) = \min\left(u,v\right)$, $W\left(u,v\right) = \max\left(u + v - 1, 0\right)$, $u, v \in \left[0,1\right]$ and that $\Phi$ denotes the cumulative distribution function of a standard normal random variable.

\medskip
\begin{proposition} \label{copulareflection} Let $h > 0$. The copula of $(B, \tilde{B}^{h})$, $\left(C^{ref,h}_t\right)_{t \geq 0}$, is defined by 
\begin{equation}
\label{copulareflectioneq}
C^{ref,h}_t\left(u,v\right) =
\left\lbrace
\begin{array}{ccc}
v & \mbox{if} & \Phi^{-1}\left(u\right) - \Phi^{-1}\left(v\right) \geq \frac{2h}{\sqrt{t}}  \\
W\left(u,v\right) + \Phi\left(\Phi^{-1}\left(M\left(u,1-v\right)\right) - \frac{2h}{\sqrt{t}}\right) & \mbox{if} & \Phi^{-1}\left(u\right) - \Phi^{-1}\left(v\right) < \frac{2h}{\sqrt{t}}
\end{array}
\right.
\end{equation}
and $\left(C^{ref,h}_t\right)_{t \geq 0} \in \mathcal{C}_B$. We call this copula the Reflection Brownian Copula. 
\end{proposition}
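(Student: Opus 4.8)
The plan is to compute the joint cumulative distribution function of $\bigl(B_t,\tilde{B}^{h}_t\bigr)$ explicitly and then read off the copula, exploiting that both marginals are $N(0,t)$. Since $\tilde{B}^{h}$ is a Brownian motion, $\mathbb{P}\bigl(B_t\le x\bigr)=\Phi\bigl(x/\sqrt{t}\bigr)$ and $\mathbb{P}\bigl(\tilde{B}^{h}_t\le y\bigr)=\Phi\bigl(y/\sqrt{t}\bigr)$, so by Sklar's theorem the copula is recovered from $H_t(x,y):=\mathbb{P}\bigl(B_t\le x,\tilde{B}^{h}_t\le y\bigr)$ through the substitutions $x=\sqrt{t}\,\Phi^{-1}(u)$ and $y=\sqrt{t}\,\Phi^{-1}(v)$; note these turn $\Phi^{-1}(u)-\Phi^{-1}(v)\ge 2h/\sqrt{t}$ into $x\ge y+2h$. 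The first step is to rewrite $\tilde{B}^{h}_t$ as a pathwise function of $B_t$ and the running maximum $\overline{B}_t:=\sup_{0\le s\le t}B_s$: since $\{\tau^{h}\le t\}=\{\overline{B}_t\ge h\}$, one has $\tilde{B}^{h}_t=-B_t$ on $\{\overline{B}_t<h\}$ and $\tilde{B}^{h}_t=B_t-2h$ on $\{\overline{B}_t\ge h\}$.

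Next I would decompose, according to whether the reflection has occurred,
\[H_t(x,y)=\mathbb{P}\bigl(-y\le B_t\le x,\,\overline{B}_t<h\bigr)+\mathbb{P}\bigl(B_t\le\min(x,y+2h),\,\overline{B}_t\ge h\bigr).\]
Both terms involve only the joint law of $\bigl(B_t,\overline{B}_t\bigr)$, which I would evaluate with the reflection principle in the form $\mathbb{P}\bigl(B_t\le a,\,\overline{B}_t\ge h\bigr)=\mathbb{P}\bigl(B_t\ge 2h-a\bigr)=\Phi\bigl((a-2h)/\sqrt{t}\bigr)$, valid for $a\le h$, together with the complementary identity $\mathbb{P}\bigl(B_t\le a,\,\overline{B}_t\ge h\bigr)=1-2\Phi\bigl(h/\sqrt{t}\bigr)+\Phi\bigl(a/\sqrt{t}\bigr)$, valid for $a\ge h$.

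The main work, and the principal obstacle, is the ensuing case analysis: the regions are delimited by the positions of $x$ and $-y$ relative to the barrier $h$ and to each other, while the coarse split in the statement, $x\ge y+2h$ versus $x<y+2h$, is what decides $\min(x,y+2h)$. In the branch $x\ge y+2h$ I expect both sub-cases ($y\le -h$ and $y>-h$) to collapse, after cancellation, to $H_t(x,y)=\Phi\bigl(y/\sqrt{t}\bigr)$, i.e. the value $v$, reflecting that $\{\tilde{B}^{h}_t\le y\}\subset\{B_t\le x\}$ up to a null event. In the branch $x<y+2h$ the two terms recombine into $(u+v-1)+\Phi\bigl((\min(x,-y)-2h)/\sqrt{t}\bigr)$; here one checks that $u+v-1=\mathbb{P}\bigl(-y\le B_t\le x\bigr)$ agrees with $W(u,v)$ on the region where it is nonnegative, while $\Phi^{-1}\bigl(M(u,1-v)\bigr)=\min\bigl(\Phi^{-1}(u),-\Phi^{-1}(v)\bigr)=\min(x,-y)/\sqrt{t}$ reproduces the argument of the $\Phi$-term, so the second line of \eqref{copulareflectioneq} is recovered. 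I would finally check the boundary $x=y+2h$ to confirm the two branches glue continuously.

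To conclude $\bigl(C^{ref,h}_t\bigr)_{t\ge 0}\in\mathcal{C}_B$, I would exhibit $Z=\bigl(B,\tilde{B}^{h}\bigr)$ as the diffusion required by Definition \ref{admissiblecopula}: each coordinate is a Brownian motion, so both infinitesimal generators equal $\frac12\partial_{xx}$ and both start at $0$, matching the prescribed Brownian marginals, and the copula of $\bigl(Z^1_t,Z^2_t\bigr)$ is $C^{ref,h}_t$ by construction. The one point needing justification is that $Z$ is Markovian: the pair lives on the line $\{v=-u\}$ before the reflection and on $\{v=u-2h\}$ afterwards, the two lines meeting only at $(h,-h)$, so the current state determines the regime outside the null set $\{B_t=h\}$ and the future therefore depends on the past only through the present. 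This makes $Z$ a (degenerate, single-noise) Markovian diffusion, which is all that Definition \ref{admissiblecopula} demands.
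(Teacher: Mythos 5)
Your proof follows essentially the same route as the paper: the same decomposition of $\mathbb{P}\bigl(B_t \le x, \tilde{B}^{h}_t \le y\bigr)$ according to whether $\sup_{s\le t} B_s$ has reached $h$, the same evaluation of both pieces via the reflection principle (the paper's lemma on the joint law of $\bigl(B_t, \sup_{s \le t} B_s\bigr)$), the same case analysis collapsing the branch $x \ge y+2h$ to $v$ and recombining the branch $x < y+2h$ into $W(u,v) + \Phi\bigl(\Phi^{-1}\bigl(M(u,1-v)\bigr) - \frac{2h}{\sqrt{t}}\bigr)$, and the same final substitution $x = \sqrt{t}\,\Phi^{-1}(u)$, $y = \sqrt{t}\,\Phi^{-1}(v)$. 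The only departure is that you explicitly justify membership in $\mathcal{C}_B$ by checking that $Z = \bigl(B,\tilde{B}^{h}\bigr)$ is Markovian in the sense of Definition \ref{admissiblecopula} (the two-lines argument), a point the paper's proof leaves implicit; that is a useful addition rather than a divergence.
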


\subsection{Extensions}
\label{extensions}
\medskip

In this section, we give methods to construct new admissible copulae for Brownian motions from the Reflection Brownian Copula.

\medskip
Proposition \ref{nondegenerated} and its proof gives an approach to construct different admissible copulae for Brownian motions based on the Reflection Brownian Copula considering a correlated Brownian motion to the reflection of the Brownian motion. Thus, the copula of Proposition \ref{nondegenerated} is the copula of $\left(B, \rho \tilde{B}^h + \sqrt{1-\rho^2} Z\right)$, with $h > 0$, $\rho \in \left(0,1\right)$ and $Z = \left(Z_t\right)_{t \geq 0}$ a Brownian motion independent from $B$.

\begin{proposition}  \label{nondegenerated}
Let $h > 0$ and $\rho \in \left(0,1\right)$. The copula 
\[C_t(u,v) = \left\lbrace
\begin{array}{ccc}
\Phi_{\rho}\Bigl(\Phi^{-1}\left(u\right), \Phi^{-1}\left(v\right)+\frac{2\rho h}{\sqrt{t}}\Bigr) + v - \Phi\Bigl(\Phi^{-1}\left(v\right)+\frac{2\rho h}{\sqrt{t}}\Bigr) &\hspace{-0.1em} \mbox{if} & u  \geq \Phi\Bigl(\frac{h}{\sqrt{t}}\Bigr) \\
 \Phi_{-\rho}\Bigl(\Phi^{-1}\left(u\right),\Phi^{-1}\left(v\right)\Bigr) + \Phi_{\rho}\Bigl(\Phi^{-1}\left(u\right) - \frac{2h}{\sqrt{t}}, \Phi^{-1}\left(1-v\right)-\frac{2\rho h}{\sqrt{t}}\Bigr) +\\
   \Phi_{\rho}\Bigl(\Phi^{-1}\left(u\right)-\frac{2h}{\sqrt{t}}, \Phi^{-1}\left(v\right)\Bigr) -\Phi\Bigl(\Phi^{-1}\left(u\right)-\frac{2h}{\sqrt{t}}\Bigr)&\hspace{-0.1em} \mbox{if} & u  < \Phi\Bigl(\frac{h}{\sqrt{t}}\Bigr)
\end{array}\right.
\]

is in $\mathcal{C}^B$.
\end{proposition}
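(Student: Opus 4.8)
The plan is to verify Definition \ref{admissiblecopula} directly, by exhibiting the underlying Markov diffusion and then computing the copula of its first two coordinates. Keeping the notation of Section \ref{closedformula}, let $B$ be the driving Brownian motion, $\tilde{B}^{h}$ its reflection on $x=h$, $\tau^h = \inf\{t \geq 0 : B_t = h\}$, and let $W$ be a Brownian motion independent of $B$ on an extension of the space. I would set $B^1 = B$ and
\[
B^2 = \rho\, \tilde{B}^{h} + \sqrt{1-\rho^2}\, W .
\]
By the reflection principle $\tilde{B}^{h}$ is a Brownian motion, and since $W$ is independent of $B$ the process $B^2$ is again a standard Brownian motion with $B^2_0 = 0$; in particular $B^1_t$ and $B^2_t$ are both $\mathcal{N}(0,t)$, so their common generator is that of Brownian motion. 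Writing $\varepsilon_t = \mathbf{1}_{t \geq \tau^h} - \mathbf{1}_{t < \tau^h}$, one has $d\tilde{B}^{h}_t = \varepsilon_t\, dB_t$ and $dB^2_t = \rho\varepsilon_t\, dB_t + \sqrt{1-\rho^2}\, dW_t$, so the triple $Z = (B^1, B^2, \tilde{B}^{h})$ solves a driftless stochastic differential equation driven by $(B,W)$ whose diffusion coefficient depends on the state only through $\varepsilon_t$. The first step is to argue that $\varepsilon_t$ is a measurable function of $(B_t, \tilde{B}^{h}_t)$: before $\tau^h$ the pair lies on the ray $\{\tilde x = -x,\, x < h\}$ and after $\tau^h$ on the line $\{\tilde x = x - 2h\}$, two sets meeting only at $(h,-h)$, which is visited on a Lebesgue-null set of times. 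Hence $Z$ is a Markovian diffusion with the prescribed marginal generators and initial conditions (the Markov property of the pair $(B,\tilde{B}^{h})$ being exactly what underlies Proposition \ref{copulareflection}), and admissibility reduces to identifying the copula $C_t$ of $(B^1_t, B^2_t)$.

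For the distributional computation I would condition on the information generated by $B$. Conditionally, $B^2_t$ is Gaussian with variance $(1-\rho^2)t$ and mean $-\rho B_t$ on $\{t < \tau^h\}$ (where $\tilde{B}^{h}_t = -B_t$) and mean $\rho B_t - 2\rho h$ on $\{t \geq \tau^h\}$ (where $\tilde{B}^{h}_t = B_t - 2h$). Writing $\phi_t$ for the $\mathcal{N}(0,t)$ density, the reflection principle gives the joint law of $\bigl(B_t, \mathbf{1}_{t \geq \tau^h}\bigr)$, namely $\mathbb{P}(B_t \in dx,\, t < \tau^h) = (\phi_t(x) - \phi_t(2h-x))\mathbf{1}_{x<h}\,dx$ and $\mathbb{P}(B_t \in dx,\, t \geq \tau^h) = \phi_t(2h-x)\mathbf{1}_{x<h}\,dx + \phi_t(x)\mathbf{1}_{x \geq h}\,dx$. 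Integrating the conditional Gaussian tail $\Phi\!\left(\tfrac{b - m(x)}{\sqrt{(1-\rho^2)t}}\right)$ against these two measures yields the joint distribution function $F(a,b) = \mathbb{P}(B_t \leq a,\, B^2_t \leq b)$ as a sum of integrals of the form $\int^{a} \Phi(\alpha + \beta x)\,\phi_t(x)\,dx$.

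The third step is to evaluate these integrals and pass to the copula. Each integral of a normal tail against a Gaussian density is a bivariate normal probability, so it rewrites as a value of $\Phi_{\rho}$ or $\Phi_{-\rho}$; the sign of the off-diagonal correlation is $-\rho$ on the non-reflected contribution and $+\rho$ on the reflected one, matching the two blocks of the statement. Since $B^1_t$ and $B^2_t$ are both $\mathcal{N}(0,t)$, I would substitute $a = \sqrt{t}\,\Phi^{-1}(u)$ and $b = \sqrt{t}\,\Phi^{-1}(v)$, so that $C_t(u,v) = F\bigl(\sqrt t\,\Phi^{-1}(u), \sqrt t\,\Phi^{-1}(v)\bigr)$. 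The threshold $a \geq h$ becomes $u \geq \Phi(h/\sqrt t)$, producing the two cases, and collecting terms gives the announced closed form. I expect the main obstacle to be purely the bookkeeping of this last step: keeping the reflected and non-reflected contributions separate, tracking the shift $2\rho h/\sqrt t$ that comes from the post-reflection mean, and aligning the arguments of $\Phi_{\pm\rho}$ so that the four terms collapse exactly to the expression in the proposition, while checking that the two pieces agree on the seam $u = \Phi(h/\sqrt t)$ so that $C_t$ is a genuine continuous copula.
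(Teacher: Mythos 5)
Your proposal is correct, and it reaches the formula by a genuinely different route than the paper, even though the underlying construction is identical. Both proofs start from the same coupling $B^1=B$, $B^2=\rho\,\tilde{B}^{h}+\sqrt{1-\rho^2}\,W$ with $W$ independent of $B$; the difference is the direction of conditioning. The paper conditions on the independent noise: it writes $\mathbb{P}(X_t\le x,\,Y_t\le y)=\mathbb{E}\bigl[\mathbb{P}\bigl(B_t\le x,\,\tilde{B}^{h}_t\le \tfrac{y-\sqrt{1-\rho^2}Z_t}{\rho}\mid Z_t\bigr)\bigr]$, plugs in the already-computed joint law of $(B_t,\tilde{B}^{h}_t)$ from Equation \eqref{refcopulainter}, and evaluates the resulting three expectations with Lemma \ref{intphi}. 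You instead condition on the driving motion, using the reflection principle to get the joint law of $(B_t,\mathbf{1}_{t\ge\tau^h})$ and the conditional Gaussian law $B^2_t\mid B\sim\mathcal{N}(\rho\tilde{B}^{h}_t,(1-\rho^2)t)$, and then apply the same kind of Gaussian identity (your ``normal tail against normal density equals bivariate normal probability'' is exactly Lemma \ref{intphi} (i)). Your route buys three things: it makes the two-state (reflected/non-reflected) structure of the final formula transparent; it never divides by $\rho$, so the conditioning step works uniformly for $\rho\in(-1,1)$, whereas the paper's inversion $\tilde{B}^{h}_t\le(y-\sqrt{1-\rho^2}Z_t)/\rho$ genuinely needs $\rho>0$; and, importantly, you explicitly verify the admissibility requirement of Definition \ref{admissiblecopula} by arguing that $(B^1,B^2,\tilde{B}^{h})$ is a Markovian diffusion (the reflection indicator being a.e.\ a function of the state), a point the paper's proof leaves entirely implicit. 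The paper's route, in exchange, is shorter on the page because it recycles Proposition \ref{copulareflection} wholesale rather than re-deriving the reflection decomposition. One small imprecision in your sketch: the non-reflected contribution does not only produce $\Phi_{-\rho}$ terms; its reflection-principle correction $-\phi_t(2h-x)$ also generates one of the two $\Phi_{\rho}$ terms in the $u<\Phi(h/\sqrt{t})$ branch after the substitution $x\mapsto 2h-x$. This is exactly the bookkeeping you flagged as the remaining work, so it is not a gap, but the sign pattern is slightly more interleaved than stated.
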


\medskip
Contrary to the Reflection Brownian Copula, this copula is non degenerated in the sense that we have two distinct sources of randomness. Indeed, in the Reflection Brownian Copula case, if we know the trajectory of the Brownian motion, we also know the one of its reflection.

\begin{remark} In the case $\rho = 0$, we still have a copula which is the independent copula and then that is in $\mathcal{C}_{B}$.
\end{remark}

\medskip
An other way to construct admissible copulae is to consider a random barrier $\xi$. By enlarging the filtration, the copula of the two processes is an admissible copula and it can be computed by integrating the copula of the Reflection Brownian motion according to the law of the barrier. Proposition \ref{randombarrier} gives the copula of $\bigl(B, \tilde{B}^{\xi}\bigr)$ which is clearly in $\mathcal{C}^{B}$ because $\bigl(B, \tilde{B}^{\xi}, \xi\bigr)$ is Markovian. 
\medskip

\begin{proposition} \label{randombarrier}Let $\xi$ be a positive random variable with law having a density and $\overline{F}^{\xi}$ its survival function. The copula 
\[
C^{\xi}_t\left(u,v\right) = v - \int_{-\infty}^{\Phi^{-1}\left(M\left(1-u, v\right)\right)} \frac{e^{\frac{-w^2}{2}}}{\sqrt{2\pi}}\overline{F}^{\xi}\Bigl(\frac{\sqrt{t}}{2}\left(\Phi^{-1}\left(M\left(u,1-v\right)\right) - w\right)\Bigr)dw 
\]
is in $\mathcal{C}^B$. 
\end{proposition}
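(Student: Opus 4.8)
The plan is to realize the copula $C^{\xi}_t$ as the copula of $(B_t, \widetilde{B}^{\xi}_t)$, where $\widetilde{B}^{\xi}$ is the reflection of $B$ at the \emph{random} level $\xi$, and then invoke the admissibility machinery already set up for the Reflection Brownian Copula. More precisely, I would take $\xi$ independent of the Brownian motion $B$, work on an enlarged filtered probability space carrying both $B$ and $\xi$, and set $\widetilde{B}^{\xi}_t = -B_t + 2(B_t - B_{\tau^{\xi}})\mathbf{1}_{t \geq \tau^{\xi}}$ with $\tau^{\xi} = \inf\{t \geq 0 : B_t = \xi\}$. The key structural observation is that conditionally on $\{\xi = h\}$, the pair $(B, \widetilde{B}^{\xi})$ is exactly $(B, \widetilde{B}^{h})$, whose copula is the Reflection Brownian Copula $C^{ref,h}_t$ of Proposition \ref{copulareflection}, and $\widetilde{B}^{\xi}$ is still a Brownian motion by the reflection principle (applied after conditioning on the level, using independence of $\xi$ and $B$). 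Thus $C^{\xi}_t \in \mathcal{C}_B$ will follow from Definition \ref{admissiblecopula} once I exhibit $(B, \widetilde{B}^{\xi})$ as a Markovian diffusion on the extended space with the correct generators and initial conditions; the Markov property here is delicate because $\widetilde{B}^{\xi}$ alone is not Markov, but the triple (or the appropriately augmented state process recording whether the barrier has been hit) is, which is why we enlarge the filtration rather than work with $\widetilde{B}^{\xi}$ in isolation.

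The second and computational part is to verify that the copula of $(B_t, \widetilde{B}^{\xi}_t)$ is given by the stated integral formula. Here I would condition on $\xi$ and integrate the Reflection Brownian Copula against the law of $\xi$. Writing the joint law in terms of the copula requires expressing $\mathbb{P}(B_t \le a, \widetilde{B}^{\xi}_t \le b)$, and by independence this equals $\int_0^{\infty} \mathbb{P}(B_t \le a, \widetilde{B}^{h}_t \le b)\, d\mathbb{P}^{\xi}(h)$. Passing to uniform marginals $u = \Phi(a/\sqrt{t})$, $v = \Phi(b/\sqrt{t})$ and substituting the closed form of $C^{ref,h}_t$ from \eqref{copulareflectioneq}, the two-case structure (depending on whether $\Phi^{-1}(u) - \Phi^{-1}(v) \gtrless 2h/\sqrt{t}$) becomes, after integrating over $h$, a threshold on the dummy variable that produces the single integral with the survival function $\overline{F}^{\xi}$. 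I expect the bookkeeping to collapse the $\min$ and $\max$ terms $M$ and $W$ appearing in \eqref{copulareflectioneq} into the two $M(u,1-v)$ and $M(1-u,v)$ expressions in the target formula, with the $\frac{\sqrt{t}}{2}(\Phi^{-1}(M(u,1-v)) - w)$ argument arising precisely from inverting the condition $\Phi^{-1}(u) - \Phi^{-1}(v) = 2h/\sqrt{t}$ to read off the range of $h$ contributing to each point.

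The main obstacle I anticipate is the Markovianity requirement in Definition \ref{admissiblecopula}: unlike the deterministic-barrier case, the reflection level is now random, so I must identify a genuine $\mathbb{R}^m$ Markovian diffusion $Z$ (with $m \geq 2$) on the extended space whose first two components have the Brownian generator and whose time-$t$ copula is $C^{\xi}_t$. The natural candidate augments $(B_t, \widetilde{B}^{\xi}_t)$ with the initial randomization $\xi$ (treated as a degenerate, constant-in-time diffusion component $dZ^3_t = 0$ with $Z^3_0 = \xi$) so that the joint process is Markov once the starting value of $\xi$ is part of the state. Verifying that this augmented process is admissible in the precise sense of Definition \ref{admissiblecopula} — in particular that conditioning on $\xi$ and reflecting preserves the Brownian law of the second coordinate and that the enlarged filtration keeps $B$ a Brownian motion — is the technical heart of the argument; the subsequent integration against $\overline{F}^{\xi}$ is then essentially the Fubini-type computation sketched above. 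Since $\xi$ has a density, there are no atoms to complicate the change of variables, and the resulting $C^{\xi}_t$ is continuous, so the identification of the integral with the copula is clean.
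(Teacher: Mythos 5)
Your proposal follows essentially the same route as the paper's proof: the paper likewise realizes $C^{\xi}_t$ as the copula of $\bigl(B_t, \tilde{B}^{\xi}_t\bigr)$ with $\xi$ independent of $B$ on an enlarged filtration, conditions on $\xi$, substitutes the closed formula of Proposition \ref{copulareflection}, and integrates against the law of the barrier (splitting into the three terms coming from the two-case structure of the Reflection Brownian Copula and using $M\left(1-u,v\right) + W\left(u,v\right) = v$) to obtain the stated integral. Your discussion of the Markovianity requirement of Definition \ref{admissiblecopula} --- augmenting the state with $\xi$ as a constant, degenerate diffusion component --- is actually more explicit than the paper, which settles admissibility with only a brief remark about enlarging the filtration.
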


\medskip
Example \ref{exponentialbarrier} below gives a copula with closed formula built with the method of Proposition \ref{randombarrier}. 

\begin{example} \label{exponentialbarrier} Let $\xi \overset{d}{=} h + X$ with $h \in \mathbb{R}$ and $X$ a random variable following an exponential law with parameter $\lambda > 0$. We have
$
\overline{F}^{\xi}(x) = \left\lbrace
\begin{array}{ccc}
1 & \mbox{if} & x \leq h \\
e^{-\lambda (x-h)} & \mbox{if} & x > h 
\end{array}\right.
$
and the copula 
\begin{align*}
\label{exponentialbarriereq}
&C^{exp,h,\lambda}_t\left(u,v\right) =W\left(u,v\right) + \min\Bigl[\Phi\Bigl(\Phi^{-1}\left(M\left(1-u,v\right)\right) - \frac{2h}{\sqrt{t}}\Bigr), M\left(u,1-v\right)\Bigr] \numberthis\\
&- \Phi\Bigl(\min\Bigl[\Phi^{-1}\left(M\left(1-u,v\right)\right) - \frac{2h}{\sqrt{t}}, \Phi^{-1}\left(M\left(u,1-v\right)\right)\Bigr] - \frac{\lambda\sqrt{t}}{2}\Bigr)e^{\lambda h + \frac{\lambda^2t}{4} + \frac{\lambda\sqrt{t}}{2}\Phi^{-1}\left(M\left(u,1-v\right)\right)}
\end{align*}
is in $\mathcal{C}_B$.
\end{example}

\medskip
The methods of Proposition \ref{nondegenerated} and \ref{randombarrier} could be used simultaneously to construct new classes of admissible copulae. Figure \ref{RBC} represents the Reflection Brownian Copula and some of its extensions.

\begin{figure}[h!]
    \centering
    \begin{subfigure}[b]{0.32\textwidth}
        \centering
        \includegraphics[width=\textwidth]{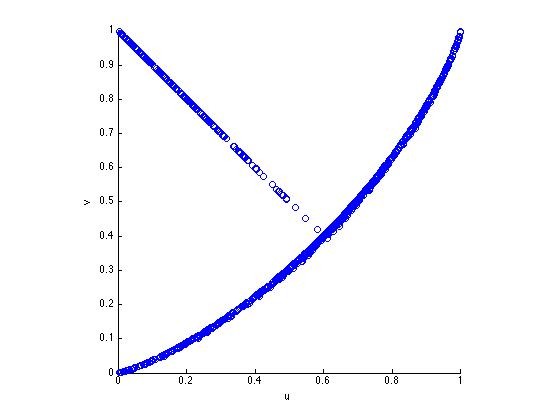}
        \caption{\label{RBCa}}
    \end{subfigure}
    \begin{subfigure}[b]{0.32\textwidth}
        \centering
        \includegraphics[width=\textwidth]{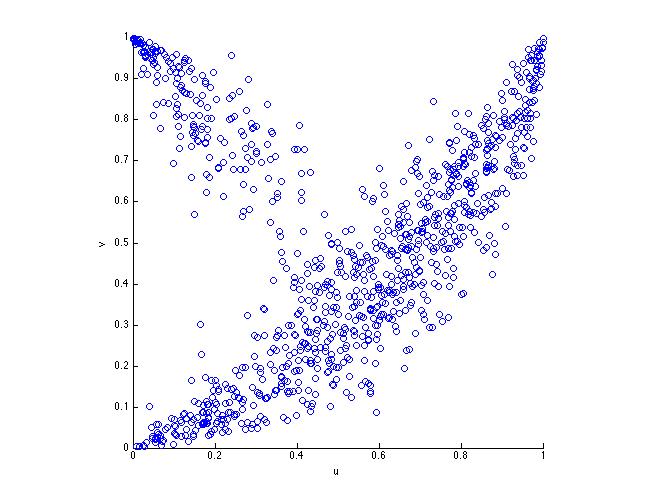}
        \caption{\label{RBCb}}
   
    \end{subfigure}
        \begin{subfigure}[b]{0.32\textwidth}
        \centering
        \includegraphics[width=\textwidth]{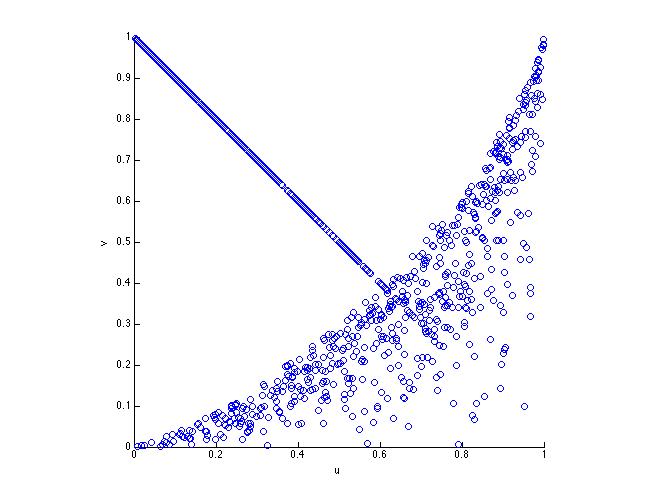}
        \caption{\label{RBCc}}
    \end{subfigure}
     \caption{\label{RBC} \it The Reflection Brownian Copula $C^{ref,h}$ and some of its extensions at time $t = 1$ with $h = 2$. Figure \ref{RBCa} is the Reflection Brownian Copula. Figure \ref{RBCb} is the extension considering a Brownian motion correlated to the reflection of the first Brownian with a correlation $\rho = 0.95$, which is the copula of Proposition \ref{nondegenerated}. Figure \ref{RBCc} is the extension in the case of a random barrier following an exponential law with parameter $\lambda = 2$, which is the copula of Example \ref{exponentialbarrier}.
     }
\end{figure}

\begin{remark} In Proposition \ref{copulareflection}, Proposition \ref{nondegenerated} and Proposition \ref{randombarrier}, the given copula is the copula between two Brownian motions $B_t^1$ and $B_t^2$ at time $t$. In Patton \citep{patton06}, the dynamic copula in a discrete time setting is the copula between $B_t^1$ and $B_t^2$ knowing the filtration generated by $\left(B_{t-1}^1, B_{t-1}^2, B_{t-2}^1, B_{t-2}^2, ..., B^1_0, B^2_0\right)$. In our case, the copula of $\left(B_t^1, B_t^2\right)$ is conditioned to the values of $B_0^1$ and $B_0^2$, which are equals almost surely to 0. One could compute the copula of $\left(B_t^1, B_t^2\right)$ knowing $\left(B_s^1, B_s^2\right)$, but this is not the subject of this paper. We want to study $\mathbb{P}\left(B_t^1-B_t^2 \geq \eta\right)$ that only requires the copula of $\left(B_t^1, B_t^2\right)$ and not $\mathbb{P}\left(B_t^1-B_t^2 \geq \eta\mid B_s^1, B_s^2\right)$.  Furthermore, the dependence between $B_t^1,B_t^2,B_s^1$ and $B_s^2$ is implicitely given in the dynamic of $\left(B_1, B_2\right)$ that we know. For application, we want to price European spread options of the form $\left(f\left(B_t^1\right) - g\left(B_t^2\right)\right)^+$, that depends only of the law of $\left(B_t^1, B_t^2\right)$ and hence of the copula of $\left(B_t^1, B_t^2\right)$. For American spread options, we know the dynamic of $\left(B^1, B^2\right)$ and we work in a Markovian framework. 
\end{remark}

\begin{remark}
The study of the range of $\mathbb{P}\left(B_t^1-B_t^2 \geq \eta\mid B_s^1, B_s^2\right)$ seems very close to our problem: $\mathbb{P}\left(B_t^1-B_t^2 \geq \eta\mid B_s^1, B_s^2\right) = \mathbb{P}\left(B_t^1-B_s^1-\left(B_t^2-B_s^2\right) \geq \eta - \left(B_s^1-B_s^2\right) \mid B_1^s, B_2^s\right)$ and $B_1^t-B_1^s$ (resp. $B_2^t-B_2^s$) is a Brownian motion independent from $B_1^s$ (resp. $B_2^s$). However, we let this problem for further studies.
\end{remark}
\section{Control of the distribution of the difference between two Brownian motions}
\label{spreaddistributioncontrol}

Let $B^1$ and $B^2$ be two standard Brownian motions defined on a common filtered probability space ($\Omega$, $\mathcal{F}$, $\left(\mathcal{F}_t\right)_{t \geq 0}$, $\mathbb{P}_C$) with $\left(\mathcal{F}_t\right)_{t\geq0}$ satisfying the usual hypothesis and where $\mathbb{P}_{C}$ is the probability measure associated to $\left(B^1, B^2\right)$ and $C = \left(C_t\right)_{t \geq 0} \in \mathcal{C}_B$ is the copula of $\left(B^1,B^2\right)$. In this section, we are interested in the distribution of the difference between $B^1$ and $B^2$, i.e. the function $x \mapsto \mathbb{P}_{C}\left(B_t^1 - B_t^2 \geq x\right)$ for $t > 0$ and in particular in the right tail of this distribution, i.e. when $x > 0$. Since the distributions of $B^1$ and $B^2$ are known, this function is entirely determined by the copula of $\left(B^1,B^2\right)$. Our goal is to find the range of values that can be achieved by this function at a given $x > 0$. Given $\eta > 0$ and $t \geq 0$, we define the function 
\begin{equation} \label{functionS}\begin{array}{ccccc}
S_{\eta,t} & : & \mathcal{C}_B & \to & \left[0,1\right] \\
 & & C & \mapsto & \mathbb{P}_{C}\left(B^1_t - B^2_t \geq \eta\right). \\
\end{array}
\end{equation}

\begin{remark} $\mathbb{P}_C$ is a probability measure that verifies $\mathbb{P}_C\left(B^1_t \leq x, B^2_t \leq y\right) = C_t\left(\Phi\left(\frac{x}{\sqrt{t}}\right),\Phi\left(\frac{y}{\sqrt{t}}\right)\right)$ for $x, y \in \mathbb{R}$. However, $C$ does not describe entirely $\mathbb{P}_{C}$. Indeed, $C$ describe the dependence between $B^1_t$ and $B^2_t$ at a given time $t$ but not between $B^1_s$ and $B^2_t$ with $s \neq t$ for instance.
\end{remark}

Our objective is to control the value of this function at a given time $t$ by controlling the dependence between the two Brownian motions. For this, we first study the range of this function $Ran\left(S_{\eta,t}\right)$. We show that the Reflection Brownian Copula defined in Section \ref{reflectionbrowniancopula} and its extensions allow us to control $S_{\eta,t}$ and to achieve all the values in $Ran\left(S_{\eta,t}\right)$. After showing the limitations of symmetric copulae for the control of $S_{\eta,t}$ in Section \ref{symmetry}, we give a result about $Ran\left(S_{\eta,t}\right)$ in a static case in Section \ref{spreadstatic}, i.e. in the case of two Gaussian random variables. Most of results of Section \ref{symmetry} and Section \ref{spreadstatic} are classic for the sum of random variables ; we adapt them to the difference case. Finally, we give the main result concerning the range of $S_{\eta,t}$ in Section \ref{spreaddynamic}.

\subsection{Impact of symmetry on $S_{\eta,t}$}
\label{symmetry}
\medskip

In this section, we show that modeling the dependence between two random variables with symmetric copulae limits the values that can be taken by the distribution of the difference between two random variables. It imposes some constraints on this distribution. Using asymmetric copulae is then necessary to control $S_{\eta, t}$. We also show that we can find asymmetric copulae suitable for Brownian motions.

\begin{definition} \label{copulasymmetric} A copula C is symmetric if $C\left(u,v\right) = C\left(v,u\right)$, $u, v \in \left[0,1\right]$. We denote by $\mathcal{C}_{s}$ the set of symmetric copulae.
\end{definition}

Note that $\mathcal{C}_G \subset \mathcal{C}_{s}$ with $\mathcal{C}_G$  the set of Gaussian copulae.

\bigskip
If $X$ and $Y$ are two random variables with continuous cumulative distribution functions, we denote by $C^{X,Y}$ the copula of $(X,Y)$. Sklar's Theorem \citep{sklar59} guarantees the existence and the unicity of $C^{X,Y}$. Proposition \ref{symmetry0} gives properties on the distribution the difference of two random variables if their copula is symmetric.

\medskip
\begin{proposition} \label{symmetry0}  \label{symmetryprop} Let $X$ and $Y$ be two real valued random variables defined on the same probability space $\left(\Omega,\; \mathcal{F},\; \mathbb{P}\right)$ with copula $C^{X,Y}$ and with continuous marginal distribution functions $F^X$ and $F^Y$. If $F^X = F^Y$ and $C^{X,Y} \in \mathcal{C}_{s}$ then $\mathbb{P}\left(X-Y \leq -x\right) = \mathbb{P}\left(X - Y \geq x\right)$.
\end{proposition}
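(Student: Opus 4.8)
The plan is to show that the hypotheses force the pair $(X,Y)$ to be exchangeable, and that exchangeability of the pair immediately yields the symmetry of the law of $X-Y$ about the origin.

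First I would invoke Sklar's Theorem to write the joint distribution function of $(X,Y)$ as $H(a,b) = C^{X,Y}\left(F^X(a),F^Y(b)\right)$ for all $a,b \in \mathbb{R}$; the continuity of the marginals guarantees both the existence and the uniqueness of $C^{X,Y}$ and the validity of this representation. Then, writing $F^X = F^Y =: F$ and using the symmetry $C^{X,Y}(u,v) = C^{X,Y}(v,u)$, I would compute the distribution function of the swapped pair $(Y,X)$: its value at $(a,b)$ is $\mathbb{P}(Y \leq a, X \leq b) = H(b,a) = C^{X,Y}\left(F(b),F(a)\right) = C^{X,Y}\left(F(a),F(b)\right) = H(a,b)$. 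Since two random vectors with identical distribution functions have identical laws, this establishes that $(X,Y)$ and $(Y,X)$ have the same distribution.

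Next I would apply the continuous map $(x,y) \mapsto x-y$ to both sides of this distributional identity, obtaining that $X-Y$ and $Y-X = -(X-Y)$ have the same law; that is, $X-Y$ is symmetric about $0$. Finally, for any $x$ this symmetry gives $\mathbb{P}(X-Y \geq x) = \mathbb{P}\left(-(X-Y) \geq x\right) = \mathbb{P}(X-Y \leq -x)$, which is the desired conclusion.

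There is no deep obstacle here; the only point requiring care is the role of the hypothesis $F^X = F^Y$. Symmetry of the copula alone does not produce a symmetric joint law — it is precisely the equality of the two marginals that lets the copula symmetry pass through the Sklar representation to the joint distribution function. I would also flag that the continuity of $F^X$ and $F^Y$ is what licenses the clean Sklar representation (and the uniqueness of the copula) used throughout.
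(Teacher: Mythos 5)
Your proof is correct, but it follows a genuinely different route from the paper's. You establish that the hypotheses make $(X,Y)$ exchangeable --- via Sklar's representation $H(a,b) = C^{X,Y}\left(F(a),F(b)\right)$ and the symmetry of the copula, the distribution functions of $(X,Y)$ and $(Y,X)$ coincide --- and then push this distributional identity through the map $(x,y)\mapsto x-y$ to conclude that $X-Y$ and $-(X-Y)$ have the same law. The paper instead works analytically: it invokes an integral representation (from Cherubini et al.) of the cumulative distribution function of a sum of dependent random variables, $\mathbb{P}\left(X+Y\leq x\right) = \int_0^1 \partial_u C\left(u, F^Y\left(x-\left(F^X\right)^{-1}\left(u\right)\right)\right)du$, converts it into formulas for $\mathbb{P}\left(X-Y\leq x\right)$ and $\mathbb{P}\left(X-Y > x\right)$, and then compares integrands using $C^{Y,X} = C^{X,Y}$. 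Your argument is more elementary and self-contained: it avoids the lemma guaranteeing existence of the partial derivatives $\partial_u C$ and makes the probabilistic content (exchangeability implies symmetry of the difference) transparent. What the paper's computation buys is reusable machinery: the difference formulas \eqref{diffcdf} and \eqref{diffintegral} derived in that proof are invoked again later, notably in the proof of Proposition \ref{boundcopula} (iii), so the heavier approach is an investment rather than a necessity for this particular statement. Your closing remarks are also on point: equal marginals are exactly what lets the copula symmetry transfer to the joint law, and continuity of the marginals is what secures uniqueness of the copula in Sklar's theorem.
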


\medskip
We can extend the definition of symmetry and asymmetry to Markov Diffusion Copulae: we denote by $\mathcal{C}_a^d = \{(C_t)_{t \geq 0} : \forall t \geq 0, C_t \in \mathcal{C}_s \}$ the set of symmetric Markov Diffusion Copulae and by $\mathcal{C}_s^d = \{(C_t)_{t \geq 0} : \forall t \geq 0, C_t \in \mathcal{C} \setminus \mathcal{C}_s \}$ the set of asymmetric Markov Diffusion Copulae.

\medskip
\begin{corollary} For $\eta > 0$ and $t > 0$, we have:
\[Ran\left(\restriction{S_{\eta,t}}{\mathcal{C}^d_s}\right) \subset \Bigl[0,\frac{1}{2}\Bigr]\]
with $\restriction{S_{\eta,t}}{\mathcal{C}^d_s}$ the restriction of $S_{\eta,t}$ to $\mathcal{C}^d_s$.
\end{corollary}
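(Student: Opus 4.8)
The plan is to read the corollary off directly from Proposition \ref{symmetry0}, since restricting to symmetric Markov Diffusion Copulae reproduces, at the fixed time $t$, exactly the hypotheses of that proposition. Fix $C = (C_s)_{s \geq 0} \in \mathcal{C}^d_s$, so that in particular $C_t \in \mathcal{C}_s$ is symmetric. First I would record the two marginals at time $t$: since $B^1$ and $B^2$ are standard Brownian motions and $t > 0$, both $B^1_t$ and $B^2_t$ are centered Gaussian with variance $t$, hence they share the same continuous cumulative distribution function $x \mapsto \Phi\left(\frac{x}{\sqrt{t}}\right)$. Thus the pair $(X,Y) := (B^1_t, B^2_t)$ has equal continuous marginal distribution functions $F^X = F^Y$ and a symmetric copula $C^{X,Y} = C_t \in \mathcal{C}_s$, which is precisely what Proposition \ref{symmetry0} requires.

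Next I would apply Proposition \ref{symmetry0} to $(X,Y)$ with the value $\eta$ in place of $x$, which yields the symmetry identity
\[\mathbb{P}_C\left(B^1_t - B^2_t \leq -\eta\right) = \mathbb{P}_C\left(B^1_t - B^2_t \geq \eta\right) = S_{\eta,t}(C).\]
Because $\eta > 0$, the two events $\{B^1_t - B^2_t \geq \eta\}$ and $\{B^1_t - B^2_t \leq -\eta\}$ are disjoint, so summing their (equal) probabilities gives $2\, S_{\eta,t}(C) \leq 1$, that is $S_{\eta,t}(C) \leq \frac{1}{2}$. Since $S_{\eta,t}(C) \geq 0$ is automatic for a probability, every value taken by $S_{\eta,t}$ on $\mathcal{C}^d_s$ lies in $\left[0, \frac{1}{2}\right]$, which is exactly the claimed inclusion $Ran\left(\restriction{S_{\eta,t}}{\mathcal{C}^d_s}\right) \subset \left[0, \frac{1}{2}\right]$.

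I do not expect any genuine obstacle here; the entire content is carried by Proposition \ref{symmetry0}, and what remains is purely a matter of checking that its hypotheses hold and performing a disjointness argument. The two points that deserve a moment of care are, first, the verification that the two Gaussian marginals at time $t$ are continuous and equal — this is where $t > 0$ enters, guaranteeing a nondegenerate law with a continuous distribution function — and, second, the disjointness step, which needs the strict inequality $\eta > 0$ so that the thresholds $\eta$ and $-\eta$ are separated and the two events cannot overlap.
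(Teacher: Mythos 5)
Your proof is correct and follows essentially the same route as the paper: apply Proposition \ref{symmetry0} to $(B^1_t, B^2_t)$ to get $\mathbb{P}_C\left(B^1_t - B^2_t \geq \eta\right) = \mathbb{P}_C\left(B^1_t - B^2_t \leq -\eta\right)$, then bound the sum of these two probabilities by $1$ to conclude $S_{\eta,t}(C) \leq \frac{1}{2}$. Your version is in fact slightly more careful than the paper's, since you explicitly verify the hypotheses of Proposition \ref{symmetry0} (equal continuous Gaussian marginals at time $t > 0$) and note that the strict inequality $\eta > 0$ is what makes the two events disjoint.
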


\begin{preuve} If we consider two Brownian motions $B^1$ and $B^2$ with dynamic copula $C \in \mathcal{C}^d_s$, we have according to Proposition \ref{symmetry0}: $\mathbb{P}\left(B^1_t - B^2_t \geq x\right) = \mathbb{P}\left(B^1_t - B^2_t \leq -x\right)$. However, $\mathbb{P}\left(B^1_t-B^2_t \geq x\right) + \mathbb{P}\left(B^1_t-B^2_t \leq -x\right) \leq 1$ if $x \geq 0$. Then we have the constraint $\mathbb{P}\left(B^1_t-B^2_t \geq x\right) \leq \frac{1}{2}$.
\end{preuve}

In particular, since the Gaussian copula is symmetric, it is not possible to obtain asymmetry in the distribution of $B^1_t -B^2_t$ at each time $t$ when the dependence between two Brownian motions is given by a correlation structure. Limiting the modeling of the dependence to the Gaussian copula or to symmetric copulae makes the distribution of their difference symmetric and limits the value of $S_{\eta,t}$.

\medskip
Modeling the dependence by an asymmetric copula is then necessary to have higher values than $\frac{1}{2}$ for $S_{\eta,t}$. We have
\[\mathcal{C}_{B} \cap \mathcal{C}_a^d \neq \emptyset.\]
Indeed, the Reflection Brownian Copula defined in Equation \eqref{copulareflectioneq} is in $\mathcal{C}_B$ and is asymmetric. The set of admissible copulae for Brownian motion is not reduced to the set of Gaussian copulae and furthermore it contains an asymmetric copula which is the Reflection Brownian Copula. Jaworski and Krzywda \citep{jaworski13} and Bosc \citep{bosc12} have proven the existence of symmetric suitable copulae for Brownian motions. However, they did not find asymmetric copulae  suitable for Brownian motions. We can also show that extensions of the Brownian Reflection Copula defined in Section \ref{extensions} are asymmetric. To our knowledge, these copulae are the only asymmetric copulae suitable for Brownian motions in the literature.

\begin{remark} Copulae constructed in Section \ref{extensions} can also be used as a method to construct asymmetric copulae, which is not always evident.  
\end{remark}

\subsection{The Gaussian Random Variables Case}
\label{spreadstatic}
\medskip

Let us consider two standard normal random variables $X$ and $Y$ defined on a common probability space ($\Omega$, $\mathcal{F}$, $\mathbb{P}_C$) where $\mathbb{P}_{C}$ is the probability measure associated to the copula $C$ of $\left(X, Y\right)$. Since the laws of the marginals of $X$ and $Y$ are fixed, the probability measure only depends on the copula of $\left(X,Y\right)$, which justifies the notation $\mathbb{P}_{C}$. In this section, we study the control of the distribution of the difference $\mathbb{P}_C\left(X - Y \geq \eta\right)$ for a given $\eta$. We need to adapt the definition of $S_{\eta,t}$ for the static case, i.e. when the copula are not dynamic. We define the function
\[\begin{array}{ccccc}
\tilde{S}_{\eta} & : & \mathcal{C} & \to & \left[0,1\right] \\
 & & C & \mapsto & \mathbb{P}_{C}\left(X - Y \geq \eta\right) \\
\end{array}
\]
for a given $\eta > 0$.

\begin{remark} $\mathbb{P}_C$ is defined by $\mathbb{P}_C\left(X \leq x, Y \leq y\right) = C\left(\Phi\left(x\right),\Phi\left(y\right)\right)$ for $x, y \in \mathbb{R}$.
\end{remark}

\medskip

In particular, we look for an upper bound of $\tilde{S}_{\eta}$. Lower bound is trivial and is achieved by the copula $M\left(u,v\right) = \min\left(u,v\right)$. Note that this copula is equivalent to having correlation 1 between the two random variables and corresponds to a case of comonotonicity. The problem is similar to the one consisting in finding bounds on the distribution of the sum. Makarov \citep{makarov82} finds bounds on the cumulative distribution function of the sum of two random variables at a given point given the marginals. R{\"u}schendorf \citep{ruschendorf82} proves this result using optimal transport theory. Frank et al. \citep{frank87} prove the same result using copulae and find a copula that achieves the bound. Furthermore, the results are extended to dimensions greater than 2 and to the cumulative distribution function of $L(X,Y)$ where $L$ is a non decreasing continuous function in $X$ and $Y$ with $X$ and $Y$ two random variables. Finding these bounds have several applications in finance and insurance such as finding bounds on value-at-risk \citep{embrechts03}.

\bigskip
In Proposition \ref{boundcopula}, we study the range of values taken by $\tilde{S}_{\eta}$. In particular, we look for an upper bound when the copula is taken among the set of Gaussian copulae then among all the copulae. We also find the range of $\tilde{S}_{\eta}$. In order to maximize $\tilde{S}_{\eta}\left(C\right)$ over all the copulae, we use the approach of Frank et al. \citep{frank87} with copulae. 

\medskip
\begin{proposition} \label{boundcopula} Let $\eta > 0$.

\medskip
Let 
\[
C^{r}(u,v) = \begin{cases}
M\left(u -1 + r , v\right) & \text{ if $(u,v) \in \left[ 1-r,1\right] \times  \left[ 0,r\right]$,}\\ 
W\left(u,v\right) & \text{if $(u,v) \in \left[0,1\right]^2 \setminus \left(\left[ 1-r,1\right] \times  \left[ 0,r\right]\right)$}
\end{cases}
\]
with $r =  2\Phi\left(\frac{-\eta}{2}\right)$.

\medskip
We have:
\begin{enumerate}
\item[(i)]  $Ran\left(\restriction{\tilde{S}_{\eta}}{\mathcal{C}_G}\right) = \left[0, \Phi\left(\frac{-\eta}{2}\right) \right]$ with $\restriction{\tilde{S}_{\eta}}{\mathcal{C}_G}$ the restriction of $\tilde{S}_{\eta}$ to $\mathcal{C}_G$,
\item[(ii)]  $\underset{C \in \mathcal{C}}{\sup \;} \tilde{S}_{\eta}\left(C\right) =  2\Phi\left(\frac{-\eta}{2}\right)$ and the supremum is achieved with $C^{r}$,
\item[(iii)]  $Ran\left(\tilde{S}_{\eta}\right) = \left[0,2\Phi\left(\frac{-\eta}{2}\right) \right]$.
\end{enumerate}
\end{proposition}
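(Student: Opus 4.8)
My plan is to treat the three claims in order: the Gaussian restriction (i) by an explicit law computation, the extremal value (ii) by a matching upper bound together with an achievability check for $C^r$, and the range (iii) by interpolation.

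For (i), I would note that under the Gaussian copula $C_{G,\rho}$ the pair $(X,Y)$ is bivariate normal with unit variances and correlation $\rho$, so $X-Y \sim \mathcal{N}(0,2(1-\rho))$ and hence $\tilde{S}_\eta(C_{G,\rho}) = \Phi\!\left(\frac{-\eta}{\sqrt{2(1-\rho)}}\right)$, with the convention that $\rho=1$ (the copula $M$) gives $0$. This is continuous and strictly decreasing in $\rho$ on $[-1,1]$, equal to $\Phi(-\eta/2)$ at $\rho=-1$ and tending to $0$ as $\rho\uparrow 1$; the intermediate value theorem then gives $Ran(\restriction{\tilde{S}_\eta}{\mathcal{C}_G}) = [0,\Phi(-\eta/2)]$.

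For the upper bound in (ii), I would use a pointwise inclusion of events: for every $a\in\R$, $\{X-Y\geq\eta\}\subseteq\{X\geq a\}\cup\{Y\leq a-\eta\}$ (if $X<a$ and $Y>a-\eta$ then $X-Y<\eta$). Applying $\mathbb{P}_C$ and using only the fixed marginals gives $\tilde{S}_\eta(C)\leq \Phi(-a)+\Phi(a-\eta)$ for all $C$ and all $a$; minimizing the right-hand side over $a$ (the minimizer is $a=\eta/2$, where $\phi(a)=\phi(a-\eta)$ for the standard normal density $\phi=\Phi'$) yields the marginal-only bound $\tilde{S}_\eta(C)\leq 2\Phi(-\eta/2)$. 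This is exactly the Makarov / Frank--Nelsen--Schweizer bound specialized to the difference, and it produces the target constant. The crux is then showing that $C^r$ attains it. I would first read off the support of the measure $\mu^r$ induced by $C^r$: outside the rectangle $[1-r,1]\times[0,r]$ it is the countermonotone copula $W$, carrying its mass on the antidiagonal $\{v=1-u\}$, while on the rectangle the term $\min(u-1+r,v)$ reroutes that mass onto the shifted diagonal $\{v=u-1+r,\ u\in[1-r,1]\}$; one checks continuity across the four edges and that the marginals remain uniform, so $\mu^r$ places mass $1-r$ on the antidiagonal for $u\in[0,1-r)$ and mass $r$ on the shifted diagonal. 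On the antidiagonal piece $Y=\Phi^{-1}(1-U)=-X$, so $X-Y=2\Phi^{-1}(U)$ with $U<1-r=2\Phi(\eta/2)-1<\Phi(\eta/2)$, whence $X-Y<\eta$ and this piece contributes nothing. On the shifted-diagonal piece, writing $b=\Phi^{-1}(V)$ and $a=\Phi^{-1}(V+1-r)$ for $V\in[0,r]$, one has $\Phi(a)-\Phi(b)=1-r=2\Phi(\eta/2)-1$ fixed and $X-Y=a-b$. The key lemma is that among all intervals $[b,a]$ of fixed Gaussian mass $2\Phi(\eta/2)-1$ the length $a-b$ is minimized, with value exactly $\eta$, by the symmetric interval $[-\eta/2,\eta/2]$ — because $\phi$ is symmetric and strictly decreasing in $|t|$. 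Hence $X-Y\geq\eta$ on the whole shifted diagonal, so this piece contributes its full mass $r=2\Phi(-\eta/2)$, giving $\tilde{S}_\eta(C^r)=2\Phi(-\eta/2)$ and proving (ii).

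Finally, for (iii) the inclusion $Ran(\tilde{S}_\eta)\subseteq[0,2\Phi(-\eta/2)]$ is the bound from (ii) together with nonnegativity. For surjectivity I would interpolate between a copula giving $0$ — for instance $M$, under which $X=Y$ a.s. so $\tilde{S}_\eta(M)=0$ — and $C^r$, using the convex combinations $C_\lambda=(1-\lambda)M+\lambda C^r\in\mathcal{C}$. Since the Lebesgue--Stieltjes measure depends affinely on the copula, $\mathbb{P}_{C_\lambda}=(1-\lambda)\mathbb{P}_M+\lambda\,\mathbb{P}_{C^r}$, and thus $\tilde{S}_\eta(C_\lambda)=2\lambda\,\Phi(-\eta/2)$ sweeps the whole interval $[0,2\Phi(-\eta/2)]$ as $\lambda$ runs over $[0,1]$. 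The only genuinely delicate step is the minimal-length lemma inside (ii); everything else is bookkeeping about the support of $C^r$ and the affine dependence of $\tilde{S}_\eta$ on $C$.
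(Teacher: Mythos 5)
Your proposal is correct, and in parts (ii) and (iii) it takes a genuinely different route from the paper. Part (i) is the same computation in both (the law of $X-Y$ under $C_{G,\rho}$ plus monotonicity in $\rho$). For the upper bound in (ii), the paper proves nothing from scratch: it invokes the results of \cite{frank87} on $\inf_C \mathbb{P}_C\left(X+Y<\eta\right)$, transfers them to the difference via $-Y \overset{d}{=} Y$ and complementation, and obtains the attaining copula through the transformation $C^{X,Y}\left(u,v\right) = u - C^{X,-Y}\left(u,1-v\right)$; you instead derive the bound directly from the event inclusion $\{X-Y\geq\eta\}\subseteq\{X\geq a\}\cup\{Y\leq a-\eta\}$ optimized at $a=\eta/2$, and you verify attainment by $C^r$ by hand, splitting its measure into the antidiagonal piece (mass $1-r$, on which $X-Y<\eta$) and the shifted-diagonal piece (mass $r$, on which $X-Y\geq\eta$ by your minimal-length lemma: among intervals of fixed Gaussian mass, the symmetric one is shortest). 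That lemma is true and your justification (symmetry and unimodality of the density) is the right reason, but in a final write-up you should prove it: differentiate $b\mapsto \Phi^{-1}\left(\Phi\left(b\right)+1-r\right)-b$, whose derivative $\phi\left(b\right)/\phi\left(a\right)-1$ is negative for $b<-\eta/2$ and positive for $b>-\eta/2$. In (iii) the divergence is sharpest: the paper splits the interval in two, handles $\left[0,\Phi\left(-\eta/2\right)\right]$ with Gaussian copulae, and covers $\left[\Phi\left(-\eta/2\right),2\Phi\left(-\eta/2\right)\right]$ with a patchwork family $C^p_{\rho}$ (a Gaussian block in $\left[1-r,1\right]\times\left[0,r\right]$, $W$ outside), computing $\tilde{S}_{\eta}\left(C^p_{\rho}\right)$ as an explicit integral, proving continuity in $\rho$ by dominated convergence and identifying the limits at $\rho=\pm1$, then concluding by the intermediate value theorem; you bypass all of this with mixtures $C_{\lambda}=\left(1-\lambda\right)M+\lambda C^r$ and the affinity of $C\mapsto\mathbb{P}_C$ (rectangle increments, hence the induced measures, are linear in $C$), which gives the exact formula $\tilde{S}_{\eta}\left(C_{\lambda}\right)=2\lambda\Phi\left(-\eta/2\right)$ with no case split and no limit arguments. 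Your route is shorter, fully self-contained, and actually verifies the attainment claim that the paper delegates to the literature; what the paper's construction buys is a family of copulae with the two-state correlation structure that the rest of the paper is organized around (illustrated in Figure \ref{copulapatchworkfig}), whereas your mixtures carry an atom of comonotone mass on the diagonal and have no such modeling interpretation.
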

\medskip

\medskip
If we only consider the set of Gaussian copulae, $\tilde{S}_{\eta}$ can only achieve the values in $\left[0, \Phi\left(\frac{-\eta}{2}\right) \right]$. If we consider all the copulae, values in $\left[\Phi\left(\frac{-\eta}{2}\right), 2\Phi\left(\frac{-\eta}{2}\right) \right]$ can also be achieved. Indeed, we can use the family of copulae constructed in Proposition \ref{boundcopula} to achieve these values. It has a particular structure: it is divided in two parts according to the value of the first random variable. One state corresponds to a positive correlation and the upper bound is achieved in the comonotonic case. The other state corresponds to the countermonotonic case.

\medskip
The family of copulae constructed in Proposition \ref{boundcopula} are patchwork copulae \citep{durante13}. Given a copula $C$, a patchwork copula is constructed by changing the value of $C$ in a subrectangle of the unit square and replacing it with an other copula. In our case, we consider the countermonotonic copula and we change its values in the rectangle $\left[1-r,1\right] \times \left[0,r\right]$, replacing it by a Gaussian copula with parameter $\rho$. The copula achieving the bound corresponds to $\rho = 1$ and in this particular case, the copula is called a shuffle of $M$ copula \citep{nelsen06}. Figure \ref{copulapatchworkfig} shows illustration of the copulae family constructed in Proposition \ref{boundcopula} with a correlation of 1 and a correlation of $-0.95$.

\begin{figure}[h!]
    \centering
    \begin{subfigure}[b]{0.32\textwidth}
        \centering
        \includegraphics[width=\textwidth]{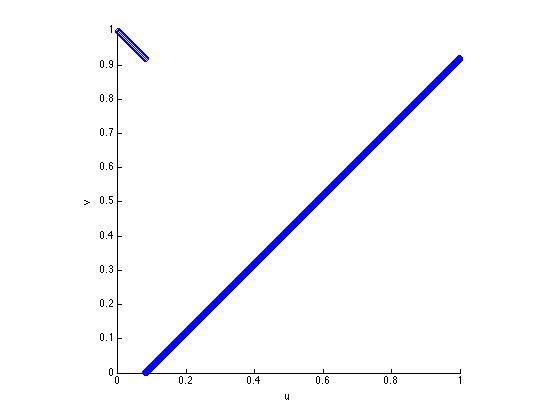}
        \caption{\label{copulapatchworkfig1} \it $\rho = 1$}
    \end{subfigure}
    \begin{subfigure}[b]{0.32\textwidth}
        \centering
        \includegraphics[width=\textwidth]{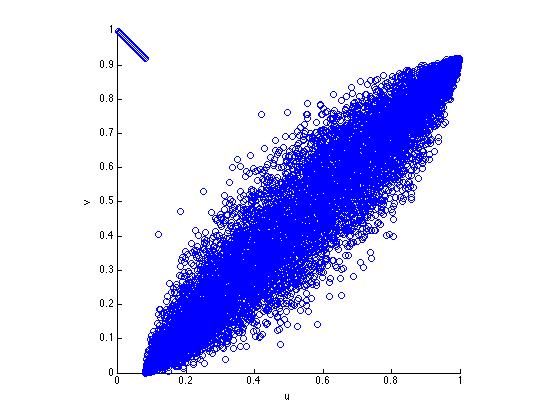}
        \caption{\it $\rho = 0.95$}
        \end{subfigure}
    \caption{\it \label{copulapatchworkfig} Patchwork copula $C^r(u,v)$ presenting two states depending on the value of u: the first copula is in the upper left part of the plan and is $W$ ; the second one is the Gaussian copula with correlation equal to $\rho$, with $\rho = 1$ that is the degenerated copula $M$ or $\rho = 0.95$. $r$ is equal to $2\Phi\left(\frac{-\eta}{2}\right)$ with $\eta = 0.2$.}
 \end{figure}

\bigskip
If we consider two Brownian motions $B^1$ and $B^2$, $B^1_t$ and $B^2_t$ at a given time $t$ are Gaussian random variables with variance $t$. Proposition \ref{boundcopula} can be applied with $\eta^{'} = \frac{\eta}{\sqrt{t}}$. Modeling the dependence of Brownian motions with a Gaussian copula then limits us in terms of values taken by $S_{\eta,t}$. In particular, it is not possible to have probabilities greater than $\frac{1}{2}$ which was already proven with the symmetry property of Gaussian copulae.

\bigskip
In this section, we showed the limits of the Gaussian copulae and that it was possible to achieve new values for $\tilde{S}_{\eta}$ or to put asymmetry in the distribution of the difference with the use of different types of copulae. However, the copulae were used to model the dependence between the two Gaussian variables, i.e. two Brownian motions at given time $t$. We do not know if the copulae are suitable to model the dependence between $B^1 = (B^1_t)_{t \geq 0}$ and $B^2 = (B^2_t)_{t \geq 0}$, that is in a dynamical framework. 

\subsection{The Brownian Motion Case}
\label{spreaddynamic}
\medskip
Proposition \ref{boundcopuladynamic} gives a time dynamical version of Proposition \ref{boundcopula}.

\begin{proposition} \label{boundcopuladynamic} Let $\eta > 0$ and $t > 0$. We have:
\begin{enumerate}
\item[(i)]  $Ran\left(\restriction{S_{\eta,t}}{\mathcal{C}^d_G}\right) = \left[0, \Phi\left(\frac{-\eta}{2\sqrt{t}}\right) \right]$ with $\restriction{S_{\eta,t}}{\mathcal{C}^d_G}$ the restriction of $S_{\eta,t}$ to $\mathcal{C}^d_G$,
\item[(ii)]  $\underset{C \in \mathcal{C}_B}{\sup \;} S_{\eta,t}\left(C\right) =  2\Phi\left(\frac{-\eta}{2\sqrt{t}}\right)$ and the supremum is achieved with $C^{ref,\frac{\eta}{2}}$ which is the Reflection Brownian Copula defined by Equation \eqref{copulareflectioneq},
\item[(iii)]  $Ran\left(S_{\eta,t}\right) = \left[0, 2\Phi\left(\frac{-\eta}{2\sqrt{t}}\right) \right]$.
\end{enumerate}
\end{proposition}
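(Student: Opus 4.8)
The plan is to reduce the dynamical claim to the static Proposition~\ref{boundcopula} by a scaling argument, and then to produce explicit admissible families realizing the claimed interval. Fix $t>0$ and $C\in\mathcal{C}_B$. At time $t$ the pair $(B^1_t,B^2_t)$ consists of two $N(0,t)$ variables whose copula is the slice $C_t$, so $S_{\eta,t}(C)$ depends on $C$ only through $C_t$. Setting $X=B^1_t/\sqrt t$ and $Y=B^2_t/\sqrt t$, which are standard normals with copula $C_t$, one has $\{B^1_t-B^2_t\ge\eta\}=\{X-Y\ge\eta/\sqrt t\}$ and hence $S_{\eta,t}(C)=\tilde S_{\eta/\sqrt t}(C_t)$. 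Since $\Phi\bigl(-\tfrac{\eta/\sqrt t}{2}\bigr)=\Phi\bigl(\tfrac{-\eta}{2\sqrt t}\bigr)$, every bound of Proposition~\ref{boundcopula} transfers verbatim after replacing $\eta$ by $\eta/\sqrt t$; in particular $S_{\eta,t}(C)\le 2\Phi\bigl(\tfrac{-\eta}{2\sqrt t}\bigr)$ for all $C\in\mathcal{C}_B$, and conversely any value of $\tilde S_{\eta/\sqrt t}$ realized by the time-$t$ slice of an admissible dynamic copula is a value of $S_{\eta,t}$.

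For (i), Proposition~\ref{boundcopula}(i) gives $\tilde S_{\eta/\sqrt t}(C_{G,\rho})\in[0,\Phi(\tfrac{-\eta}{2\sqrt t})]$; explicitly $B^1_t-B^2_t\sim N(0,2t(1-\rho))$, so $S_{\eta,t}=\Phi\bigl(\tfrac{-\eta}{\sqrt t\sqrt{2(1-\rho)}}\bigr)$, which decreases continuously from $\Phi(\tfrac{-\eta}{2\sqrt t})$ at $\rho=-1$ to $0$ as $\rho\to1$. Because the constant-correlation dynamic Gaussian copulae lie in $\mathcal{C}_B$ (Section~\ref{browniancopula}), each such value is attained in $\mathcal{C}^d_G$, giving (i). For (ii) the upper bound is the transferred bound above; to attain it I would compute the difference for the Reflection Brownian Copula with $h=\eta/2$. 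From $\tilde B^h_t=-B_t+2(B_t-B_{\tau^h})\mathbf{1}_{t\ge\tau^h}$ one gets $B_t-\tilde B^h_t=2B_t<2h$ on $\{\tau^h>t\}$ and $=2h=\eta$ on $\{\tau^h\le t\}$, so $\{B_t-\tilde B^h_t\ge\eta\}=\{\tau^h\le t\}=\{\max_{s\le t}B_s\ge\eta/2\}$, and the reflection principle yields $S_{\eta,t}(C^{ref,\eta/2})=2\Phi(\tfrac{-\eta}{2\sqrt t})$, the supremum of (ii), which is thus a maximum.

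For (iii) the inclusion $Ran(S_{\eta,t})\subseteq[0,2\Phi(\tfrac{-\eta}{2\sqrt t})]$ is (ii), and both endpoints are attained ($0$ by the correlation-one copula, the top by $C^{ref,\eta/2}$). To fill the interior I would interpolate with admissible copulae of continuously varying $S_{\eta,t}$. The computation of (ii) for a general barrier $h$ gives $S_{\eta,t}(C^{ref,h})=0$ for $h<\eta/2$ and $S_{\eta,t}(C^{ref,h})=\Phi(\tfrac{-\eta}{2\sqrt t})+\Phi\bigl(\tfrac{\eta/2-2h}{\sqrt t}\bigr)$ for $h\ge\eta/2$, a continuous function decreasing from $2\Phi(\tfrac{-\eta}{2\sqrt t})$ to $\Phi(\tfrac{-\eta}{2\sqrt t})$ as $h$ runs over $[\eta/2,\infty)$; by the intermediate value theorem these copulae already cover $[\Phi(\tfrac{-\eta}{2\sqrt t}),2\Phi(\tfrac{-\eta}{2\sqrt t})]$, while part (i) covers $[0,\Phi(\tfrac{-\eta}{2\sqrt t})]$, whose union is the whole interval. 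More in the spirit of the paper, one integrates this expression against the law of a random barrier $\xi$ as in Proposition~\ref{randombarrier}, obtaining $S_{\eta,t}(C^{\xi})=\mathbb{E}\bigl[S_{\eta,t}(C^{ref,\xi})\bigr]$ by conditioning on $\xi$, and tunes the density of $\xi$ to realize every value through a single asymmetric family. I expect the genuine work to be this last step: unlike for Gaussian or reflection copulae, one cannot simply average admissible copulae, so admissibility must be supplied by the specific construction of Proposition~\ref{randombarrier}, and the mixture formula together with a dominated-convergence continuity argument must be checked, the top endpoint being reached only by the degenerate deterministic barrier already treated in (ii).
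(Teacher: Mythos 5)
Your proposal is correct, and for parts (i) and (ii) it coincides with the paper's own proof: the scaling identity $S_{\eta,t}(C)=\tilde S_{\eta/\sqrt t}(C_t)$ is exactly how the paper transfers the static bound of Proposition \ref{boundcopula} to $\mathcal{C}_B$, and your identification $\{B_t-\tilde B^{\eta/2}_t\ge\eta\}=\{\sup_{s\le t}B_s\ge\eta/2\}$ is the paper's attainment argument for the supremum. Where you genuinely diverge is part (iii). The paper fills the upper half $\bigl[\Phi\bigl(\frac{-\eta}{2\sqrt t}\bigr),2\Phi\bigl(\frac{-\eta}{2\sqrt t}\bigr)\bigr]$ with the exponential random-barrier family $C^{exp,\eta/2,\lambda}$ of Example \ref{exponentialbarrier}, computing $S_{\eta,t}\bigl(C^{exp,\eta/2,\lambda}\bigr)=2\Phi\bigl(\frac{-\eta}{2\sqrt t}\bigr)-\Phi\bigl(\frac{-\eta}{2\sqrt t}-\frac{\lambda\sqrt t}{2}\bigr)e^{\frac{\lambda^2t}{8}+\frac{\lambda\eta}{4}}$ and running $\lambda$ over $(0,\infty)$ with a continuity-and-limits argument; you instead vary the deterministic barrier $h\in[\eta/2,\infty)$ of the Reflection Brownian Copula itself. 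Your formula $S_{\eta,t}\bigl(C^{ref,h}\bigr)=\Phi\bigl(\frac{-\eta}{2\sqrt t}\bigr)+\Phi\bigl(\frac{\eta/2-2h}{\sqrt t}\bigr)$ is correct (it follows from the decomposition $\{B_t-\tilde B^h_t\ge\eta\}=\{B_t\ge\eta/2,\ \sup_{s\le t}B_s<h\}\cup\{\sup_{s\le t}B_s\ge h\}$ and Lemma \ref{lawsup}), and admissibility for every $h>0$ is already supplied by Proposition \ref{copulareflection}, so your route needs neither the random-barrier construction of Proposition \ref{randombarrier} nor the algebra behind Example \ref{exponentialbarrier}; it is strictly more elementary. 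What the paper's choice buys is the feature it advertises right after the proposition: the random-barrier copulae produce a difference $B^1_t-B^2_t$ without the atom and the discontinuity at the barrier that every $C^{ref,h}$ retains, so the same family that proves (iii) also serves as the paper's preferred modeling object. Two small points to tidy in your write-up: first, your deterministic family only attains the half-open interval $\bigl(\Phi\bigl(\frac{-\eta}{2\sqrt t}\bigr),2\Phi\bigl(\frac{-\eta}{2\sqrt t}\bigr)\bigr]$, the left endpoint being a limit as $h\to\infty$ rather than a value, which is harmless since the Gaussian family of part (i) covers $\bigl[0,\Phi\bigl(\frac{-\eta}{2\sqrt t}\bigr)\bigr]$ including that endpoint, exactly as your union argument requires; second, your alternative ``Route B'' via a mixture over a random barrier is precisely the paper's proof, so the technical work you flag there (the mixture formula and the continuity argument) is indeed what the paper carries out through Example \ref{exponentialbarrier}, but it is dispensable given Route A.
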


\medskip
We have found a copula  which maximizes $S_{\eta,t}$ at each time $t$ which is admissible for Brownian motions. This copula is also a solution to the problem $\underset{C \in \mathcal{C}}{\sup \;} \tilde{S}_{\frac{\eta}{\sqrt{t}}}\left(C\right)$ and gives an alternative solution of the supremum copula of Proposition \ref{boundcopula}. We also notice than  $Ran\left(S_{\eta,t}\right) = Ran\left(\tilde{S}_{\frac{\eta}{\sqrt{t}}}\right)$. The constraint to be in $\mathcal{C}_B$ does not change the solution of our problem, values that can be achieved are the same ; only copulae differ. Furthermore, copulae used to achieved the range of $S_{\eta,t}$ gives Markovian pairs of Brownian motions: the range of possible values for $\mathbb{P}\left(B^1_t - B^2_t \geq \eta\right)$ is the same for Markovian pairs and all pairs of Brownian motions. As in the copula of Proposition \ref{boundcopula}, the copula that achieves the supremum has two states: one of comonotonicity and one of countermonotonicity, depending here on the value of the $B^1_t - B^2_t$. Figure \ref{reflectioncopulafig} represents the Reflection Brownian Copula at time $t = 1$ with a reflection at $\frac{\eta}{2} = 0.1$. We can see that the structure is the same than the copula of Figure \ref{copulapatchworkfig1}. However, in Figure  \ref{copulapatchworkfig1} the two lines are in separated parts of the square and in Figure \ref{reflectioncopulafig}, there is a part of the square where they are both present. This is due to the fact that $\tilde{B_t}$ is not a deterministic function of $B_t$ but a deterministic function of $B_t$ and $\underset{s \leq t}{\text{sup }}B_s$.  

\begin{figure}[h!]
\centering
\includegraphics[width=0.4\textwidth]{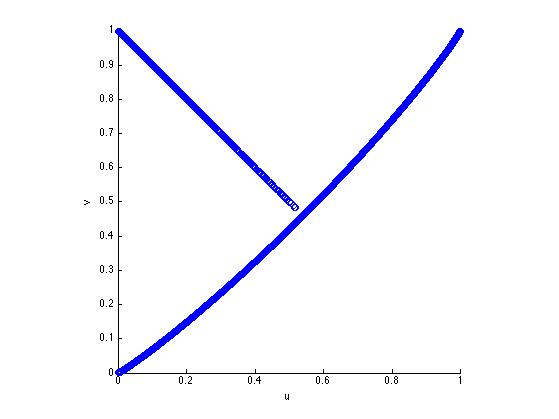}
\caption{\label{reflectioncopulafig} \it Reflection Brownian Copula $C_t^{ref,\frac{\eta}{2}}$ at time $t=1$ with $\eta = 0.2$.}
\end{figure}

Part (iii) of Proposition \ref{boundcopuladynamic} gives us a way to control  $S_{\eta,t}$. Furthermore, when the copula is the Reflection Brownian one, the probability for $B^1_t - B^2_t$ to have strictly higher value than $\eta$ is equal to 0 and there is a discontinuity at $\eta$ ; copulae of part (iii) allow us to solve this issue. The copulae become suboptimal but still achieves higher values than in the Gaussian copula case.

\bigskip
Result of Proposition \ref{boundcopuladynamic} (ii) can be interpreted with coupling. Let $X$ be a stochastic process. Let $X^{a}$ and $X^{b}$ be processes with the dynamic of $X$ such that $X^a_0 = a$ and $X^b_0 = b$. A coupling is said successful if $T = \inf \{ t \geq 0: X_t^a = X_t^b\} < \infty$ almost surely. $T$ is called the coupling time. In our situation, the two Brownian motions start at 0 and are coupled when $B_t$ = $\tilde{B}^{\frac{\eta}{2}}_t  + \eta$ which is equivalent to consider one Brownian starting at 0 and the other starting at $\eta$. We have the coupling inequality:

\begin{equation}
\label{couplinginequality}
\| \mathbb{Q}_a\left(t\right) - \mathbb{Q}_b\left(t\right) \| \leq 2 \mathbb{P}\left(T > t\right)
\end{equation}
with $\| \|$ the total variation norm and $\mathbb{Q}_a(t)$ the distribution of  $X_t^a$ (same for $\mathbb{Q}_b(t)$ and $X^t_b$). In case of equality for \eqref{couplinginequality}, the coupling is said to be optimal \citep{hsu13}. The coupling by reflection \citep{lindvall86}, consisting of taking the reflection of the Brownian motion according to the hyperplane $x = \frac{a + b}{2}$, is optimal for Brownian motion. Hsu and Sturm \citep{hsu13} prove that in the case of Brownian motions, it is the only optimal Markovian coupling (definition \ref{markoviancoupling}).

\begin{definition} \label{markoviancoupling} \citep{hsu13} Let $X = (X_1,X_2)$ be a coupling of Brownian motions. Let $\mathcal{F}^X $ be the filtration generated by X. We say that X is a Markovian coupling if for each $s \geq 0$, conditional on $\mathcal{F}_s^X$, the shifted process $\{(X_1(t+s),X_2(t+s)), t \geq 0\}$ is still a coupling of Brownian motions (now starting from $(X_1(s), X_2(s))$).
\end{definition}

\medskip
In the optimal case, $\mathbb{P}\left(T > t\right)$ is minimal. The coupling by reflection can then be interpreted as the fastest way for the two processes to be equal. In our case, it is the fastest way for the $B^1 - B^2$ to be greater than $\eta$.

\bigskip
We found an admissible copula for Brownian motions which has the property to be asymmetric and to achieve upper bound for $S_{\eta,t}$. We have also constructed new families of asymmetric copulae allowing us to control the value of $S_{\eta,t}$.  

\section{Proofs}
\label{proofs}

\subsection{Preliminary results}

We start with well known results that will be useful for the proofs of propositions.

\medskip
\begin{lemma} \label{lawsup} \label{lawinf} Let $B = \left(B_t\right)_{t \geq 0}$ be a standard Brownian motion on a filtered probability space $\left(\Omega, \mathcal{F}, \left(\mathcal{F}_t\right)_{t\geq0}, \mathbb{P}\right)$. We have, for $y \geq 0$, 
\[\mathbb{P}\Bigl(B_t \leq x,  \underset{s \leq t}{\sup \;} B_s \leq y\Bigr) = \left\lbrace
\begin{array}{lll}
\Phi\Bigl(\frac{x}{\sqrt{t}}\Bigr) - \Phi\Bigl(\frac{x-2y}{\sqrt{t}}\Bigr) & \mbox{if} & x < y  \\
2\Phi\Bigl(\frac{y}{\sqrt{t}}\Bigr) - 1  & \mbox{if} & x \geq y 
\end{array}\right..  \]

\end{lemma}

\begin{preuve} The reader is referred to \cite[Theorem\ 3.1.1.2, p.\ 137]{jeanblanc09}.
\end{preuve}

\subsection{Proof of Proposition \ref{copulareflection}}
\label{reflectionbrowniancopulaproofs}

We have:
\begin{equation}
\label{propositioncopulareflection1}
\mathbb{P}\Bigl(B_t \leq x, \tilde{B}^{h}_t \leq y\Bigr) = \mathbb{P}\Bigl(B_t \leq x, \tilde{B}^{h}_t \leq y, \underset{s \leq t}{\sup \;} B_s \leq h\Bigr) + \mathbb{P}\Bigl(B_t \leq x, \tilde{B}^{h}_t \leq y, \underset{s \leq t}{\sup \;} B_s \geq h\Bigr).
\end{equation}
We compute the first term of Equation \eqref{propositioncopulareflection1}:
\begin{align*}
\mathbb{P}\Bigl(B_t \leq x, \tilde{B}^{h}_t \leq y, \underset{s \leq t}{\sup \;} B_s \leq h\Bigr) &= \mathbb{P}\Bigl(B_t \leq x, -B_t \leq y, \underset{s \leq t}{\sup \;} B_s \leq  h\Bigr) \\
& = \mathbb{P}\Bigl( -y \leq B_t \leq x, \underset{s \leq t}{\sup \;} B_s \leq  h\Bigr) \\
& = \Bigl(\mathbb{P}\Bigl(B_t \leq x, \underset{s \leq t}{\sup \;} B_s \leq  h\Bigr) - \mathbb{P}\Bigl(B_t \leq -y, \underset{s \leq t}{\sup \;} B_s \leq  h\Bigr)\Bigr){\bf1}_{x+y > 0}\\
&= \left\lbrace
\begin{array}{ccc}
\Phi\Bigl(\frac{x}{\sqrt{t}}\Bigr) - \Phi\Bigl(\frac{x-2h}{\sqrt{t}}\Bigr) + \Phi\Bigl(\frac{y}{\sqrt{t}}\Bigr) - \Phi\Bigl(\frac{y+2h}{\sqrt{t}}\Bigr) & \mbox{if} &  x \leq h, \\
& & y \geq -h,\\
& & x + y > 0 \\
2\Phi\Bigl(\frac{h}{\sqrt{t}}\Bigr) - 1 + \Phi\Bigl(\frac{y}{\sqrt{t}}\Bigr) - \Phi\Bigl(\frac{y+2h}{\sqrt{t}}\Bigr)  & \mbox{if} & x > h, \\
& & y \geq -h \\
0 & \mbox{} & \mbox{otherwise}
\end{array}\right.
\end{align*}
by application of Lemma \ref{lawsup}. In the same way, we compute the second term of Equation \eqref{propositioncopulareflection1}: 
\begin{align*}
\mathbb{P}\Bigl(B_t \leq x, \tilde{B}^{h}_t \leq y, \underset{s \leq t}{\sup \;} B_s \geq h\Bigr) &= \mathbb{P}\Bigl(B_t \leq x, B_t \leq y + 2h, \underset{s \leq t}{\sup \;} B_s \geq h\Bigr) \\
&= \mathbb{P}\Bigl(B_t \leq \min\left(x, y + 2h\right), \underset{s \leq t}{\sup \;} B_s \geq h\Bigr)  \\
& = \left\lbrace
\begin{array}{ccc}
\Phi\Bigl(\frac{\min\left(x,y+2h\right)-2h}{\sqrt{t}}\Bigr) & \mbox{if} &  \min\left(x,y+2h\right) < h \\
- 2\Phi\Bigl(\frac{h}{\sqrt{t}}\Bigr) + 1 + \Phi\Bigl(\frac{\min\left(x,y+2h\right)}{\sqrt{t}}\Bigr) & \mbox{if} & \min\left(x,y+2h\right) \geq h
\end{array}\right..
\end{align*}
Combining the last two equations, we obtain 
\begin{align*}
 \mathbb{P}\left(B_t \leq x, \tilde{B}^{h}_t \leq y\right) &= \left\lbrace
\begin{array}{c}
\Phi\left(\frac{\min\left(x,y+2h\right)-2h}{\sqrt{t}}\right) \text{ if }  x + y \leq  0 \text{ or } \left(y \leq - h, x + y > 0\right) \\
 \Phi\left(\frac{\min\left(x,y+2h\right)}{\sqrt{t}}\right) - \Phi\left(\frac{y+2h}{\sqrt{t}}\right) + \Phi\left(\frac{y}{\sqrt{t}}\right)   \text{ if }  y > - h, \; x + y > 0\\
\end{array}\right. \\
\numberthis \label{refcopulainter}& = \left\lbrace
\begin{array}{ccc}
\Phi\Bigl(\frac{y}{\sqrt{t}}\Bigr) \text{ if } x - y \geq 2h \\
\Phi\Bigl(\frac{x-2h}{\sqrt{t}}\Bigr) \text{ if } x - y < 2h, x + y \leq 0 \\
\Phi\Bigl(\frac{x}{\sqrt{t}}\Bigr) - \Phi\Bigl(\frac{y+2h}{\sqrt{t}}\Bigr) + \Phi\Bigl(\frac{y}{\sqrt{t}}\Bigr) \text{ if }  x - y < 2h, x + y > 0 
\end{array}\right. \\
& =  \left\lbrace
\begin{array}{ccc}
\Phi\Bigl(\frac{y}{\sqrt{t}}\Bigr) \text{ if } x - y \geq 2h \\
W\Bigl(\Phi\left(\frac{y}{\sqrt{t}}\Bigr) + \Phi\Bigl(\frac{x}{\sqrt{t}}\right)\Bigr) + M\Bigl(\Phi\Bigl(\frac{x-2h}{\sqrt{t}}\Bigr), \Phi\Bigl(\frac{- y - 2h}{\sqrt{t}}\Bigr)\Bigr) \text{ if } x - y < 2h
\end{array}\right..
\end{align*}
We conclude using $C^{ref,h}_t\left(u,v\right) = \mathbb{P}\Bigl(B_t \leq \sqrt{t}\Phi^{-1}\left(u\right), \tilde{B}^{h}_t \leq \sqrt{t}\Phi^{-1}\left(v\right)\Bigr)$.

\subsection{Proof of Proposition \ref{nondegenerated}}

Recall that $\Phi_{\rho}$ denotes the bivariate cumulative distribution function of two standard normal variables correlated with $\rho \in \left[-1,1\right]$. We start with a technical lemma. 

\begin{lemma} \label{intphi} Let a, b and x $\in \mathbb{R}$. We have:
\begin{enumerate}
\item[(i)] \[\int_{-\infty}^{x} \Phi\left(a u + b\right) \frac{e^{\frac{-u^2}{2}}}{\sqrt{2\pi}} du = \Phi_{\frac{-a}{\sqrt{a^2+1}}}\Bigl(\frac{b}{\sqrt{a^2+1}}, x\Bigr).\]
\item[(ii)] \[\Phi_{\sqrt{1-\rho^2}}\left(x,y\right) = \Phi\left(y\right)\Phi\Bigl(\frac{x-\sqrt{1-\rho^2}y}{\rho}\Bigr) + \Phi\left(x\right) - \Phi_{\rho}\Bigl(x,\frac{x-\sqrt{1-\rho^2}y}{\rho}\Bigr),\; x, y \in \mathbb{R},\; \rho > 0\]
\item[(iii)]  \[\Phi_{\rho}\left(x,y\right) = \Phi\left(y\right) - \Phi_{-\rho}\left(-x,y\right),\; x, y \in \mathbb{R}\] 
\end{enumerate}
\end{lemma}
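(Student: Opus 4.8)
The plan is to prove all three identities by purely probabilistic arguments: for each one I build a single centered bivariate Gaussian vector and read off the claimed cumulative distribution function. Parts (i) and (iii) are essentially a change of variables and a complementation; the only genuine idea is needed for (ii). I would dispose of (iii) first. For a standard Gaussian pair $(X,Y)$ with correlation $\rho$, write $\Phi(y) = \mathbb{P}(Y \le y) = \mathbb{P}(X \le x, Y \le y) + \mathbb{P}(X > x, Y \le y)$. Since the marginals are continuous, $\mathbb{P}(X > x, Y \le y) = \mathbb{P}(-X \le -x,\, Y \le y)$, and $(-X, Y)$ is again standard bivariate Gaussian but with correlation $-\rho$, so this term equals $\Phi_{-\rho}(-x, y)$. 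Rearranging gives (iii) immediately.

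For (i), I would represent $\Phi(au+b)$ as a conditional probability. Let $U, V$ be independent standard normals, with $U$ playing the role of the integration variable, so that $\Phi(aU+b) = \mathbb{P}(V \le aU + b \mid U)$. Then the integral equals $\mathbb{P}(U \le x,\ V - aU \le b)$. The vector $(V - aU,\, U)$ is centered Gaussian with $\mathrm{Var}(V - aU) = 1 + a^2$, $\mathrm{Var}(U) = 1$ and $\mathrm{Cov}(V - aU,\, U) = -a$; standardizing the first coordinate by $\sqrt{a^2+1}$ converts this probability into $\Phi_{-a/\sqrt{a^2+1}}\bigl(b/\sqrt{a^2+1},\, x\bigr)$, which is the claim.

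The main obstacle is (ii), and there the key is to choose the right auxiliary variable. Set $s = \sqrt{1-\rho^2}$, take $Y, Z$ independent standard normal, and put $X = sY + \rho Z$. Then $X$ is standard normal, $\mathrm{corr}(X,Y) = s$ so $\mathbb{P}(X \le x, Y \le y) = \Phi_{\sqrt{1-\rho^2}}(x,y)$ is the left-hand side, while $\mathrm{corr}(X,Z) = \rho$ so $\mathbb{P}(X \le x, Z \le w) = \Phi_\rho(x,w)$. With $w = (x - sy)/\rho$, the independence of $Y$ and $Z$ rewrites the right-hand side as $\mathbb{P}(Y \le y, Z \le w) + \bigl(\Phi(x) - \Phi_\rho(x,w)\bigr) = \mathbb{P}(Y \le y, Z \le w) + \mathbb{P}(X \le x, Z > w)$. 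It then remains to split $\{X \le x, Y \le y\}$ along $\{Z \le w\}$ and $\{Z > w\}$ and to verify the two inclusions $\{Y \le y, Z \le w\} \subseteq \{X \le x\}$, because $X = sY + \rho Z \le sy + \rho w = x$, and $\{X \le x, Z > w\} \subseteq \{Y \le y\}$, because $Y = (X - \rho Z)/s < (x - \rho w)/s = y$. These inclusions collapse the two triple intersections onto exactly the two terms above, completing the proof.

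The delicate point throughout (ii) is picking the auxiliary Gaussian so that the two prescribed correlations, $\sqrt{1-\rho^2}$ and $\rho$, and the threshold $w = (x-sy)/\rho$ all surface at once; once $X = sY + \rho Z$ and this $w$ are fixed, the boundary $X = x$ meets the line $Y = y$ precisely at $Z = w$, which is what forces both inclusions. I expect no analytic difficulty beyond these monotonicity checks, all of which use $s, \rho > 0$ (legitimate since $0 < \rho < 1$ in the intended application, so $0 < s < 1$). As a consistency check I would confirm that integrating the conditional form of $\mathbb{P}(X \le x, Y \le y)$ against the density of $X$ and applying (i) reproduces $\Phi_{\sqrt{1-\rho^2}}$, which pins down the normalization.
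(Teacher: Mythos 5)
Your proposal is correct in all three parts. For (i) and (iii) your arguments coincide with the paper's: (i) is the same conditioning trick, writing the integral as $\mathbb{P}(V\le aU+b,\,U\le x)$ for independent standard normals and standardizing $V-aU$; (iii) is the same complementation using that $(-X,Y)$ has correlation $-\rho$. The genuine divergence is in (ii). The paper proves (ii) analytically: starting from the integral representation furnished by (i), it integrates by parts, performs a change of variables (valid because the chosen $a=-\rho/\sqrt{1-\rho^2}$ is negative), and then applies (i) a second time, concluding with the somewhat delicate substitution $a=-\rho/\sqrt{1-\rho^2}$, $b=x\sqrt{1+a^2}$, $z=(y-b)/a$. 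You instead build the linear relation $X=\sqrt{1-\rho^2}\,Y+\rho Z$ with $Y,Z$ independent standard normals, so that the two correlations $\sqrt{1-\rho^2}$ and $\rho$ appear simultaneously, and you obtain the identity as an exact, disjoint decomposition of the event $\{X\le x,\,Y\le y\}$ along $\{Z\le w\}$ with $w=(x-\sqrt{1-\rho^2}\,y)/\rho$; the two inclusions you verify are exactly what makes the triple intersections collapse, and they use only $\rho>0$ and $\sqrt{1-\rho^2}>0$, the same restriction $0<\rho<1$ implicitly required by the paper's substitution. Your route buys conceptual transparency: no integration by parts, no sign-sensitive change of variables, and the identity is revealed as a statement about how the quadrant is cut by the hyperplane on which the three (linearly dependent) variables meet their thresholds simultaneously. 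The paper's route buys uniformity: it stays entirely inside the calculus framework set up by (i), so each step is a routine computation that requires no auxiliary construction, at the cost of bookkeeping over signs and the final substitution.
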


\begin{preuve}

\noindent (ii) Let $a < 0$, $b, z \in \mathbb{R}$. We have:
\begin{align*}
\Phi_{\frac{-a}{\sqrt{a^2+1}}}\Bigl(\frac{b}{\sqrt{a^2+1}}, z\Bigr) &= \int_{-\infty}^{z} \Phi\left(a u + b\right) \frac{e^{\frac{-u^2}{2}}}{\sqrt{2\pi}} du  \\
&=\Phi\left(az+b\right)\Phi\left(z\right) - a\int_{-\infty}^{z} \frac{e^{\frac{-\left(au+b\right)^2}{2}}}{\sqrt{2\pi}}  \Phi\left(u\right) du\\
&=\Phi\left(az+b\right)\Phi\left(z\right)  + \int_{az+b}^{+\infty} \frac{e^{\frac{-u^2}{2}}}{\sqrt{2\pi}}  \Phi\Bigl(\frac{u-b}{a}\Bigr) du\\
&= \Phi\left(az+b\right)\Phi\left(z\right) + \Phi\Bigl(\frac{b}{\sqrt{1+a^2}}\Bigr) - \Phi_{\frac{1}{\sqrt{a^2+1}}}\Bigl(\frac{b}{\sqrt{a^2+1}}, ax + b\Bigr). 
\end{align*}
We conclude by taking $a = \frac{-\rho}{\sqrt{1-\rho^2}}$, $b = x\sqrt{1+a^2}$, $z = \frac{y-b}{a}$.

\smallskip
\noindent (iii) 
Let $\left(X,Y\right)$ be a Gaussian vector with $X$ and $Y$ having correlation $\rho$. We have:
\[\mathbb{P}\left(X \leq x, Y \leq y\right) = \mathbb{P}\left( - X \geq - x, Y \leq y\right) =\mathbb{P}\left(Y \leq y\right) - \mathbb{P}\left( - X \leq - x, Y \leq y\right).\]
\end{preuve}

We can now prove Proposition \ref{nondegenerated}.
\medskip
Let $X = B$ and $Y = \rho \tilde{B}^{h} + \sqrt{1-\rho^2} Z$ where $B$ and $Z$ are two independent Brownian motions. $X$ and $Y$ are Brownian motions and we have 
\begin{align*}
\mathbb{P}\left(X_t \leq x, Y_t \leq y\right) &= \mathbb{P}\Bigl(B_t \leq x, \tilde{B}^{h}_t \leq \frac{y - \sqrt{1-\rho^2} Z_t}{\rho}\Bigr)\\
&= \mathbb{E}\Bigl[\mathbb{P}\Bigl(B_t \leq x, \tilde{B}^{h}_t \leq \frac{y - \sqrt{1-\rho^2} Z_t}{\rho} \mid Z_t\Bigr)\Bigr]. 
\end{align*}
Since $B$ is independent from $Z$, using Equation \eqref{refcopulainter}, we find that $\mathbb{P}\left(X_t \leq x, Y_t \leq y\right)$ is the sum of the three following terms:
\begin{enumerate}
\item[(i)]  \[\mathbb{E}\Bigl[\Phi\Bigl(\frac{y - \sqrt{1-\rho^2} Z_t}{\rho\sqrt{t}}\Bigr){\bf1}_{Z_t \geq \frac{\rho\left(2h -x\right) + y}{\sqrt{1-\rho^2}}}\Bigr],\]
\item[(ii)]  \[\mathbb{E}\Bigl[\Phi\Bigl(\frac{x-2h}{\sqrt{t}}\Bigr){\bf1}_{Z_t \leq \frac{\rho\left(2h -x\right) + y}{\sqrt{1-\rho^2}}}{\bf1}_{Z_t \geq \frac{\rho x + y}{\sqrt{1-\rho^2}}} \Bigr],\]
\item[(iii)]  \[\mathbb{E}\Bigl[ \Bigl( \Phi\Bigl(\frac{x}{\sqrt{t}}\Bigr) - \Phi\Bigl(\frac{y + 2h\rho - \sqrt{1-\rho^2} Z_t}{\rho\sqrt{t}}\Bigr) + \Phi\Bigl(\frac{y - \sqrt{1-\rho^2} Z_t}{\rho\sqrt{t}}\Bigr) \Bigr){\bf1}_{Z_t \leq \frac{\rho\left(2h -x\right) + y}{\sqrt{1-\rho^2}}} {\bf1}_{Z_t \leq \frac{\rho x + y}{ \sqrt{1-\rho^2} } } \Bigr].\] 
\end{enumerate}

The first term (i) is equal to:
\[\mathbb{E}\Bigl[\Phi\Bigl(\frac{y - \sqrt{1-\rho^2} Z_t}{\rho\sqrt{t}}\Bigr)\Bigr] - \mathbb{E}\Bigl[\Phi\Bigl(\frac{y - \sqrt{1-\rho^2} Z_t}{\rho\sqrt{t}}\Bigr){\bf1}_{Z_t \leq \frac{\rho\left(2h -x\right) + y}{\sqrt{1-\rho^2}}}\Bigr].\]

Furthermore, using Lemma \ref{intphi}, we have 
\begin{align*}
\mathbb{E}\Bigl[\Phi\Bigl(\frac{y - \sqrt{1-\rho^2} Z_t}{\rho\sqrt{t}}\Bigr){\bf1}_{Z_t \leq \frac{\rho\left(2h -x\right) + y}{\sqrt{1-\rho^2}}}\Bigr] &= \int_{-\infty}^{\frac{\rho\left(2h -x\right) + y}{\sqrt{\left(1-\rho^2\right)t}}} \Phi\Bigl(\frac{y - \sqrt{1-\rho^2}\sqrt{t}u}{\rho\sqrt{t}}\Bigr) \frac{e^{\frac{-u^2}{2}}}{\sqrt{2\pi}} du\\
&= \Phi_{\sqrt{1-\rho^2}}\Bigl(\frac{y}{\sqrt{t}}, \frac{\rho(2h -x) + y}{\sqrt{\left(1-\rho^2\right)t}}\Bigr)
\end{align*}
and 
\[\mathbb{E}\Bigl[\Phi\Bigl(\frac{y - \sqrt{1-\rho^2} Z_t}{\rho\sqrt{t}}\Bigr)\Bigr] =  \Phi\Bigl(\frac{y}{\sqrt{t}}\Bigr).\]
We compute terms (ii) and (iii) using the same method and we find that $\mathbb{P}\Bigl(X_t \leq x, Y_t \leq y\Bigr)$ is equal to 
\begin{align*}
 &\Phi\Bigl(\frac{y}{\sqrt{t}}\Bigr) - \Phi_{\sqrt{1-\rho^2}}\Bigl(\frac{y}{\sqrt{t}}, \frac{\rho\left(2h -x\right) + y}{\sqrt{\left(1-\rho^2\right)t}}\Bigr) + \Phi_{\sqrt{1-\rho^2}}\Bigl(\frac{y}{\sqrt{t}}, \frac{\rho\min\left(2h -x,x\right) + y}{\sqrt{\left(1-\rho^2\right)t}}\Bigr)\\
& - \Phi_{\sqrt{1-\rho^2}}\Bigl(\frac{y+2h\rho}{\sqrt{t}}, \frac{\rho\min\left(2h -x,x\right) + y}{\sqrt{\left(1-\rho^2\right)t}}\Bigr) + \Phi\Bigl(\frac{x}{\sqrt{t}}\Bigr) \Phi\Bigl(\frac{\rho\min\left(2h -x,x\right) + y}{\sqrt{\left(1-\rho^2\right)t}}\Bigr) \\
&+ \Phi\Bigl(\frac{x-2h}{\sqrt{t}}\Bigr)\Bigl(\Phi\Bigl(\frac{\rho\left(\max\left(2h -x,x\right)\right) + y}{\sqrt{\left(1-\rho^2\right)t}}\Bigr) - \Phi\Bigl(\frac{\rho x + y}{\sqrt{\left(1-\rho^2\right)t}}\Bigr)\Bigr).\end{align*}

After some algebra, we find using Lemma \ref{intphi}:
\[\mathbb{P}\Bigl(X_t \leq x, Y_t \leq y\Bigr) = \left\lbrace
\begin{array}{c}
\Phi_{\rho}\Bigl(\frac{y+2\rho h}{\sqrt{t}}, \frac{x}{\sqrt{t}}\Bigr) + \Phi\Bigl(\frac{y}{\sqrt{t}}\Bigr) - \Phi\Bigl(\frac{y+2\rho h}{\sqrt{t}}\Bigr)  \text{ if }  x  \geq h \\
 \hspace{-0.5em}\Phi_{-\rho}\Bigl(\frac{y}{\sqrt{t}}, \frac{x}{\sqrt{t}}\Bigr) + \Phi_{\rho}\Bigl(\frac{-y-2\rho h}{\sqrt{t}}, \frac{x-2h}{\sqrt{t}}\Bigr) +  \Phi_{\rho}\Bigl(\frac{y}{\sqrt{t}}, \frac{x-2h}{\sqrt{t}}\Bigr) -\Phi\Bigl(\frac{x-2h}{\sqrt{t}}\Bigr) \text{ if }  x < h
\end{array}\right.
\]
and the copula is equal to $\mathbb{P}\left(X_t \leq \sqrt{t}\Phi^{-1}\left(u\right), Y_t \leq \sqrt{t}\Phi^{-1}\left(v\right)\right)$.

\subsection{Proof of Proposition \ref{randombarrier}}

Let $f^{\xi}$ be the density of $\xi$. Let $B$ be a Brownian motion independent from $\xi$. We enlarge the filtration of $B$ to take into account $\xi$. We consider the reflection of the Brownian motion $\tilde{B}^{\xi}$. We have:
\begin{equation} \label{proofrandombarrier1}\mathbb{P}\left(B_t \leq \sqrt{t}\Phi^{-1}\left(u\right), \tilde{B}^{\xi}_t \leq \sqrt{t}\Phi^{-1}\left(v\right)\right) = \mathbb{E}\left[\mathbb{P}\left(B_t \leq \sqrt{t}\Phi^{-1}\left(u\right), \tilde{B}^{\xi}_t \leq  \sqrt{t}\Phi^{-1}\left(v\right) \mid \xi\right)\right].
\end{equation}
Since $B$ is independent from $\xi$, we have according to Proposition \ref{copulareflection}:
\begin{align*}\mathbb{P}\left(B_t \leq \sqrt{t}\Phi^{-1}\left(u\right), \tilde{B}^{\xi}_t \leq  \sqrt{t}\Phi^{-1}\left(v\right) \mid \xi\right) =& v{\bf1}_{\Phi^{-1}\left(u\right) - \Phi^{-1}\left(v\right) \geq \frac{2\xi}{\sqrt{t}}} + W\left(u,v\right){\bf1}_{\Phi^{-1}\left(u\right) - \Phi^{-1}\left(v\right) < \frac{2\xi}{\sqrt{t}}} \\
& + \Phi\Bigl(\Phi^{-1}\left(M\left(u,1-v\right)\right) - \frac{2\xi}{\sqrt{t}}\Bigr){\bf1}_{\Phi^{-1}\left(u\right) - \Phi^{-1}\left(v\right) < \frac{2\xi}{\sqrt{t}}} 
\end{align*}
Thus, the right hand side of Equation \eqref{proofrandombarrier1} is the sum of the three following terms:
\begin{enumerate}
\item[(i)]  
\[\mathbb{E}\Bigl[v{\bf1}_{\Phi^{-1}\left(u\right) - \Phi^{-1}\left(v\right) \geq \frac{2\xi}{\sqrt{t}}}\Bigr] = vF^{\xi}\Bigl(\sqrt{t}\frac{\Phi^{-1}\left(u\right) - \Phi^{-1}\left(v\right)}{2}\Bigr),\]
\item[(ii)]  
\[\mathbb{E}\Bigl[W\left(u,v\right){\bf1}_{\Phi^{-1}\left(u\right) - \Phi^{-1}\left(v\right) < \frac{2\xi}{\sqrt{t}}}\Bigr] = W\left(u,v\right)\overline{F}^{\xi}\Bigl(\sqrt{t}\frac{\Phi^{-1}\left(u\right) - \Phi^{-1}\left(v\right)}{2}\Bigr),\]
\item[(iii)] 
\[\mathbb{E}\Bigl[\Phi\Bigl(\Phi^{-1}\left(M\left(u,1-v\right)\right) - \frac{2\xi}{\sqrt{t}}\Bigr){\bf1}_{\Phi^{-1}\left(u\right) - \Phi^{-1}\left(v\right) < \frac{2\xi}{\sqrt{t}}}\Bigr],\] that we denote  by $I$.
\end{enumerate}
We have:
\begin{align*}
I &= \int_{\sqrt{t}\frac{\Phi^{-1}\left(u\right) - \Phi^{-1}\left(v\right)}{2}}^{+\infty} \Phi\Bigl(\Phi^{-1}\left(M\left(u,1-v\right)\right) - \frac{2h}{\sqrt{t}}\Bigr)f^{\xi}\left(h\right)dh \\
&= M\left(1-u,v\right) \overline{F}^{\xi}\Bigl(\sqrt{t}\frac{\Phi^{-1}\left(u\right) - \Phi^{-1}\left(v\right)}{2}\Bigr) \\
&-\frac{2}{\sqrt{t}}\int_{\sqrt{t}\frac{\Phi^{-1}\left(u\right) - \Phi^{-1}\left(v\right)}{2}}^{+\infty} \Phi^{'}\Bigl(\Phi^{-1}\left(M\left(u,1-v\right)\right) - \frac{2h}{\sqrt{t}}\Bigr)\overline{F}^{\xi}\left(h\right)dh.
\end{align*}
Adding the three terms of Equation \eqref{proofrandombarrier1}, since $M\left(1-u,v\right) + W\left(u,v\right) = v$, we obtain:
\begin{align*}C^{\xi}_t\left(u,v\right)  &= v - \frac{2}{\sqrt{t}}\int_{\sqrt{t}\frac{\Phi^{-1}\left(u\right) - \Phi^{-1}\left(v\right)}{2} }^{+\infty} \Phi^{'}\Bigl(\Phi^{-1}\left(M\left(u,1-v\right)\right) - \frac{2h}{\sqrt{t}}\Bigr)\overline{F}^{\xi}\left(h\right)dh\\
&=  v - \int_{-\infty}^{\Phi^{-1}\left(M\left(1-u, v\right)\right)} \Phi^{'}\left(h\right)\overline{F}^{\xi}\Bigl(\frac{\sqrt{t}}{2}\left(\Phi^{-1}\left(M\left(u,1-v\right)\right) - h\right)\Bigr)dh
\end{align*}
with $\Phi^{'}\left(x\right) = \frac{e^{\frac{-x^2}{2}}}{\sqrt{2\pi}}$, which achieves the proof.

\subsection{Proof of Proposition \ref{symmetry0}}

\cite[Proposition\ 2.1]{cherubini11} states that
\begin{equation}
\label{sumintegral}
\mathbb{P}\left(X +Y \leq x\right) = \int_0^1 \partial_u C\left( u, F^Y\left(x - \left(F^X\right)^{-1}\left(u\right) \right) \right) du, x \in \mathbb{R}.
\end{equation}
The existence of $\partial_u C\left( u, F^Y\left(x - \left(F^X\right)^{-1}\left(u\right)\right)\right)$ for $u \in \left[0,1\right]$ is assured by \cite[Lemma\ 2.1]{cherubini11}.

We also have 
\begin{equation}
\label{eq1proofsymmetry}
F^{-Y}\left(y\right) = 1 - F^Y\left(-y\right), \; y \in \mathbb{R} \text{ and,}
\end{equation}
\begin{equation}
\label{eq2proofsymmetry}
C^{X,-Y}\left(u,v\right) = u - C\left(u, 1-v\right), \; \left(u,v\right) \in \left[0,1\right],
\end{equation}
with $C^{X,-Y}$ the copula of $(X,-Y)$.

Equation \eqref{sumintegral} is also valid for $(X,-Y)$. Using Equation \eqref{eq1proofsymmetry} and Equation \eqref{eq2proofsymmetry}, we have 
\begin{equation}
\label{diffcdf}
\mathbb{P}\left(X - Y \leq x\right) = \int_0^1 \left[1 - \partial_u C\left(u,F^Y\left(\left(F^X\right)^{-1}\left(u\right)-x\right)\right)\right]du, x \in \mathbb{R}
\end{equation}
and 
\begin{equation}
\label{diffintegral}
\mathbb{P}\left(X - Y > x\right) = \int_0^1 \partial_u C\left( u,F^Y\left(\left(F^X\right)^{-1}\left(u\right)-x\right) \right) du, x \in \mathbb{R}.
\end{equation}

\bigskip
Let us suppose that $C^{Y,X} \in \mathcal{C}_s$ and that $X$ and $Y$ have the same continous marginal distribution function $F$. Let $C^{Y,X}$ be the copula of $(Y,X)$. We have $C^{Y,X}\left(u,v\right) = C^{X,Y}\left(v,u\right)$. However, $C^{X,Y}\left(v,u\right) = C^{X,Y}\left(u,v\right)$ then $C^{Y,X}\left(u,v\right) = C^{X,Y}\left(u,v\right)$ and 
\begin{align*}
\mathbb{P}\left(X - Y \geq x\right) &= \mathbb{P}\left(Y-X \leq -x\right)\\
&= \int_0^1 \left[1 - \partial_u C^{Y,X}\left(u,F\left(\left(F\right)^{-1}\left(u\right)+x\right)\right)\right]du \\
&= \int_0^1 \left[1 - \partial_u C^{X,Y}\left(u,F\left(\left(F\right)^{-1}\left(u\right)+x\right)\right)\right]du\\
&=\mathbb{P}\left(X-Y \leq -x\right)
\end{align*}
using Equation \eqref{diffcdf}.

\subsection{Proof of Proposition \ref{boundcopula}}

\noindent (i) Let $\rho \in \left[-1, 1\right]$. We have $\tilde{S}_{\eta}\left(C_{G,\rho}\right)= \Phi\Bigl(\frac{-\eta}{\sqrt{2\left(1-\rho\right)}}\Bigr)$. This function is decreasing in $\rho$ and then the extremum are achieved for $\rho = 1$ and $\rho = -1$ and are equal to 0 and $\Phi\left(\frac{-\eta}{2}\right)$.

\smallskip
\noindent (ii) This is a direct application of the results of \citep{frank87} where superior and inferior bounds on $\mathbb{P}\left(X + Y < \eta\right)$ are found and where $X$ and $Y$ are two random variables with known marginals. 
As
\begin{enumerate}
\item[1)]  $\underset{C \in \mathcal{C}}{\sup \;} \mathbb{P}_{C}\left(X - Y \geq \eta\right) = 1 -  \underset{C \in \mathcal{C}}{\inf} \mathbb{P}_{C}\left(X - Y < \eta\right),$
\item[2)]  $-Y$ and $Y$ have the same law,
\end{enumerate}
the bound is equal to 
\[1 -  \underset{C \in \mathcal{C}}{\inf} \mathbb{P}_{C}\left(X + Y < \eta\right).\]
The copula achieving the bound is defined by the transformation 
\[C^{X,Y}\left(u,v\right) = u - C^{X,-Y}\left(u,1-v\right).\]

\smallskip
\noindent (iii) We want to prove that for all $x$ in $\left[0, 2\Phi\left(\frac{-\eta}{2}\right) \right]$, there exists $C$ in $\mathcal{C}$ such that $\tilde{S}_{\eta}\left(C\right) = x$.

\medskip
If $x \in \left[0, \Phi\left(\frac{-\eta}{2}\right) \right]$, we use a Gaussian copula with $\rho =  1 - \frac{1}{2^2}\left(\frac{\eta}{\Phi^{-1}\left(x\right)}\right)^2$.

\medskip
Let us suppose that $x \in \left[\Phi\left(\frac{-\eta}{2}\right), 2\Phi\left(\frac{-\eta}{2}\right) \right]$. We use the copula $C^{r}$  to construct a new class of copulae. As for $C^r$, we separate the square $\left[0, 1 \right]^2$ in two parts and to define a copula in each part of the square. We use the concept of patchwork copula defined by Durante et al. \citep{durante13}. Let $H =  \left[1-r,1\right] \times  \left[ 0,r\right]$, $H^c = \left[0, 1 \right]^2 \setminus H$ and $\rho \in \left(-1, 1\right)$. Let $C^p_{\rho}\left(u,v\right)$ the patchwork copula defined by $C_\rho$ in $H$ and $W$ in $H^c$: 
\begin{align*}
C^p_{\rho}\left(u,v\right) &= \mu_W\left(\left(\left[0,u\right] \times \left[0,v\right]\right) \cap H^c\right) + r C_{G,\rho}\Bigl(\frac{1}{r}\max\left(u+r-1,0\right),\frac{1}{r}\min \left(v,r\right)\Bigr)\\
&= \left(W\left(u,v\right) - W\left(u,r\right)\right){\bf1}_{v \geq r} + r C_{G,\rho}\Bigl(\frac{1}{r}\max \left(u+r-1,0\right),\min \bigl(\frac{v}{r},1\bigr)\Bigr)
\end{align*}
where $\mu_W$ is the measure induced by the copula $W$.

If we consider two standard normal random variables with copula $C^p_{\rho}$, the survival function of their difference at point $x$ is equal, according to Equation \eqref{diffintegral}, to
\begin{align*}
\int_0^1 \partial_u C^p_{\rho}\left( u, \Phi \left(\Phi^{-1}\left(u\right) - x\right) \right) du &= \int_0^1 \Big({\bf1}_{u \geq \Phi\left(\frac{x}{2} \right)} - {\bf1}_{u \geq 1 - r}\Big){\bf1}_{u \geq \Phi\left(\Phi^{-1}\left(r\right) + x\right)}du \\
&+  \int_{1-r}^1 \Phi\Bigl(\frac{\Phi^{-1}\bigl(\min \bigl(\frac{\Phi (\Phi^{-1}\left(u\right) - x)}{r},1\bigr)\bigr)-\rho \Phi^{-1}\left(\frac{u+r-1}{r}\right)}{\sqrt{1-\rho^2}}\Bigr)du\\ 
&= \left(1-r-\Phi\left(\frac{x}{2}\right)\right){\bf 1}_{x \leq 2\Phi^{-1}\left(1-r\right)} \\
&+  \int_{1-r}^1 \Phi\Bigl(\frac{\Phi^{-1}\bigl(\min \bigl(\frac{\Phi (\Phi^{-1}\left(u\right) - x)}{r},1\bigr)\bigr)-\rho \Phi^{-1}\left(\frac{u+r-1}{r}\right)}{\sqrt{1-\rho^2}}\Bigr)du\\ 
\end{align*}
which is continuous at $x = \eta$. Thus, $\tilde{S}_{\eta}\left(C^p_{\rho}\right)$ is equal to the survival function of their difference at point $\eta$, which is:
\[
\tilde{S}_{\eta}\left(C^p_{\rho}\right) = \int_{1-r}^1 \Phi\Bigl(\frac{\Phi^{-1}\bigl(\min \bigl(\frac{\Phi (\Phi^{-1}\left(u\right) - \eta)}{r},1\bigr)\bigr)-\rho \Phi^{-1}\left(\frac{u+r-1}{r}\right)}{\sqrt{1-\rho^2}}\Bigr)du.\\ 
\]
Using the previous equation and dominated convergence theorem, we can prove that $\rho \mapsto \tilde{S}_{\eta}\left(C^p_{\rho}\right)$ is continuous on $\left(-1, 1\right)$.

\smallskip
We have $C^p_{1} = C^r$ and $C^p_{-1} = W$. Furthermore, we can show after some algebra that 
\[\tilde{S}_{\eta}\left(C^p_{\rho}\right) \underset{\rho \to 1}{\longrightarrow} 2\Phi\Bigl(\frac{-\eta}{2}\Bigr) = \tilde{S}_{\eta}\left(C^p_{1}\right)\] and \[\tilde{S}_{\eta}\left(C^p_{\rho}\right) \underset{\rho \to -1}{\longrightarrow} \Phi\Bigl(\frac{-\eta}{2}\Bigr) = \tilde{S}_{\eta}\left(C^p_{-1}\right).\] Then $\rho \mapsto \tilde{S}_{\eta}\left(C^p_{\rho}\right)$ is continuous on $\left[-1,1\right]$, which achieves the proof.

\subsection{Proof of Proposition \ref{boundcopuladynamic}}

\noindent (i) As the copulae of $\mathcal{C}^d_G$ are of the form $\left(C_{G,\rho_t}\right)_{t \geq 0}$, the demonstration of this part of the proposition is similar to the one of the static framework.

\smallskip
\noindent (ii) Let $(B^1,B^2)$ be two Brownian motion with copula $C^{ref,\frac{\eta}{2}}$. $B^2$ is then the reflection of $B^1$ according to the stopping time $\tau = \inf \{ t \geq 0 : B^1_t = \frac{\eta}{2}\} = \inf \{ t \geq 0 : B^1_t - B^2_t = \eta\}$. For $t < \tau$, $B^1_t - B^2_t < \eta$ and for $t \geq \tau$, $B^1_t - B^2_t = \eta$. Thus, we have: 
\[S_{\eta, t}\left(C^{ref,\frac{\eta}{2}}\right) = \mathbb{P}_{C^{ref,\frac{\eta}{2}}}\left(t \geq \tau\right) 
= \mathbb{P}_{C^{ref,\frac{\eta}{2}}}\Bigl(\underset{s \leq t}{\sup \,} B^1_s \geq \frac{\eta}{2}\Bigr) 
= 2 \Phi\Bigl(\frac{-\eta}{2\sqrt{t}}\Bigr)
\]according to Lemma \ref{lawsup}.

If $C \in \mathcal{C}_B$, the copula $C_t$ is in $\mathcal{C}$ and then according to Proposition \ref{boundcopula} 
\[
\underset{C \in \mathcal{C}_B}{\sup \;} \mathbb{P}_{C}\left(B^1_t - B^2_t \geq \eta\right) \leq \underset{C \in \mathcal{C}}{\sup \;} \mathbb{P}_{C}\left(B^1_t - B^2_t \geq \eta\right) = 2 \Phi\Bigl(\frac{-\eta}{2\sqrt{t}}\Bigr),\]
which concludes this part of the proof.

\smallskip
\noindent (iii) We want to prove that for all $x$ in $\left[0, 2\Phi\left(\frac{-\eta}{2}\right) \right]$, there exists $C$ in $\mathcal{C}$ such that $\tilde{S}_{\eta}\left(C\right) = x$. Let $x \in \left[0, 2\Phi\left(\frac{-\eta}{2\sqrt{T}}\right) \right]$.

\medskip
If  $x \in \left[0, \Phi\left(\frac{-\eta}{2\sqrt{t}}\right) \right]$, we consider the Gaussian dynamic copula with $\left(\rho_s\right)_{s \geq 0} = 1 - \frac{1}{2t}\left(\frac{\eta}{\Phi^{-1}\left(x\right)}\right)^2 $ which is in $\left[-1,1\right]$ and we have $S_{\eta, t}\left(C_{G,\rho}\right) = x$.

\medskip
If $x \in \left[\Phi\left(\frac{-\eta}{2\sqrt{t}}\right), 2 \Phi\left(\frac{-\eta}{2\sqrt{t}}\right) \right]$, we consider the copula $C^{ref, \frac{\eta}{2} + \lambda}$ defined by Equation \eqref{copulareflectioneq} with $\lambda \geq 0$. With the use of Lemma \ref{lawsup}, we find that
\[\mathbb{P}_{C^{ref, \frac{\eta}{2} + \lambda}}\left(B^1_t - B^2_t \geq x\right)  =
\left\lbrace
\begin{array}{ccc}
\Phi\left(\frac{-x}{2\sqrt{t}}\right) + \Phi\left(\frac{x - 2\eta - 4 \lambda}{2\sqrt{t}}\right) & \mbox{if} & x \leq \eta + 2\lambda \\
0 & \mbox{if} & x > \eta +2\lambda \\
\end{array}\right.
\]
Then, 
\[S_{\eta,t}\left(C^{ref, \frac{\eta}{2} + \lambda}\right) = \Phi\left(\frac{-\eta}{2\sqrt{t}}\right) + \Phi\left(\frac{- \eta - 4 \lambda}{2\sqrt{t}}\right)\]

As we have:
\begin{enumerate}
\item[1)]  $\lambda \mapsto S_{\eta,t}\left(C^{ref, \frac{\eta}{2} + \lambda}\right)$ is continuous on $\left[0,\infty\right)$, 
\item[2)]  $S_{\eta,t}\left(C^{ref, \frac{\eta}{2} + \lambda}\right)  \underset{\lambda \to 0}{\longrightarrow} 2\Phi\left(\frac{-\eta}{2\sqrt{t}}\right)$,
\item[3)]  $S_{\eta,t}\left(C^{ref, \frac{\eta}{2} + \lambda}\right)  \underset{\lambda \to \infty}{\longrightarrow} \Phi\left(\frac{-\eta}{2\sqrt{t}}\right)$,
\end{enumerate}
we can conclude. 

\vip

\noindent {\bf Acknowledgements.} I am grateful to Olivier F\'eron and Marc Hoffmann for helpful discussion and comments. This research is supported by the department OSIRIS (Optimization, SImulation, RIsk and Statistics for Energy Markets) of EDF in the context of a CIFRE contract and by FiME (Finance for Energy Markets) Research Initiative. I thank the referees for valuable comments improving the paper considerably.



\begin{thebibliography}{10}

\bibitem{benth08}
Benth, F. E., and S. Koekebakker (2008).
\newblock Stochastic modeling of financial electricity contracts.
\newblock {\em Energy Econ.} 30(3), 1116-1157.

\bibitem{bielecki08}
Bielecki, T.R., J. Jakubowski, A. Vidozzi, and L. Vidozzi (2008).
\newblock Study of dependence for some stochastic processes.
\newblock {\em Stoch. Anal. Appl.} 26(4), 903-924.

\bibitem{bosc12}
Bosc, D. (2012).
\newblock {\em Three essays on modeling the dependence between financial
  assets}.
\newblock PhD thesis, Ecole Polytechnique X.

\bibitem{carmona03}
Carmona, R. and V. Durrleman (2003).
\newblock Pricing and hedging spread options.
\newblock {\em SIAM Rev.} 45(4), 627-685.

\bibitem{carmona05}
Carmona, R. and V. Durrleman (2005).
\newblock Generalizing the black-scholes formula to multivariate contingent
  claims.
\newblock {\em J. Comput. Finance} 9(2), 43.

\bibitem{chen89}
Chen, M. F. and S. F. Li (1989).
\newblock Coupling methods for multidimensional diffusion processes.
\newblock {\em Ann. Probab.}, 151-177.

\bibitem{cherubini04}
Cherubini, U., E. Luciano, and W. Vecchiato (2004).
\newblock {\em Copula methods in finance}.
\newblock John Wiley \& Sons.

\bibitem{cherubini11}
Cherubini, U., S. Mulinacci, and S. Romagnoli (2011).
\newblock On the distribution of the (un)bounded sum of random variables.
\newblock {\em Insurance Math. Econom.} 48(1), 56-63.

\bibitem{darsow92}
Darsow, W. F., B. Nguyen, and E. T. Olsen (1992).
\newblock Copulas and markov processes.
\newblock {\em Illinois J. Math.} 36(4), 600-642.

\bibitem{durante13}
Durante, F., J. F. S{\'a}nchez, and C. Sempi (2013).
\newblock Multivariate patchwork copulas: a unified approach with applications
  to partial comonotonicity.
\newblock {\em Insurance Math. Econom.} 53(3), 897-905.

\bibitem{embrechts03}
Embrechts, P., A. H{\"o}ing, and A. Juri (2003).
\newblock Using copulae to bound the value-at-risk for functions of dependent
  risks.
\newblock {\em Finance Stoch.} 7(2), 145-167.

\bibitem{fermanian04}
Fermanian, J.D., and M. Wegkamp (2004).
\newblock Time dependent copulas.
\newblock {\em Preprint INSEE, Paris, France}.

\bibitem{frank87}
Frank, M.J., R. B. Nelsen, and B. Schweizer (1987).
\newblock Best-possible bounds for the distribution of a sum - a problem of
  kolmogorov.
\newblock {\em Probab. Theory Related Fields} 74(2), 199-211.

\bibitem{hsu13}
Hsu, E. P., and K. T. Sturm (2013).
\newblock Maximal coupling of euclidean brownian motions.
\newblock {\em Commun. Math. Stat.} 1(1), 93-104.

\bibitem{jaworski13}
Jaworski, P. and M. Krzywda (2013).
\newblock Coupling of wiener processes by using copulas.
\newblock {\em Statist. Probab. Lett.} 83(9), 2027-2033.

\bibitem{jeanblanc09}
Jeanblanc, M., M. Yor, and M. Chesney (2009).
\newblock {\em Mathematical methods for financial markets}.
\newblock Springer.

\bibitem{li99}
Li, D. X. (1999).
\newblock On default correlation: A copula function approach.
\newblock {\em Available at SSRN 187289}.

\bibitem{lindvall86}
Lindvall, T., and L. C. G. Rogers (1986).
\newblock Coupling of multidimensional diffusions by reflection.
\newblock {\em Ann. Probab.}, 860-872.

\bibitem{makarov82}
Makarov, G. D. (1982).
\newblock Estimates for the distribution function of a sum of two random
  variables when the marginal distributions are fixed.
\newblock {\em Theory Probab. Appl.} 26(4), 803-806.

\bibitem{nelsen06}
Nelsen, R .B. (2006).
\newblock {\em An introduction to copulas}.
\newblock Springer Science \& Business Media.

\bibitem{patton06}
Patton, A. J. (2006).
\newblock Modelling asymmetric exchange rate dependence.
\newblock {\em Internat. Econom. Rev.} 47(2), 527--556.

\bibitem{ruschendorf82}
R{\"u}schendorf, L. (1982).
\newblock Random variables with maximum sums.
\newblock {\em Adv. in Appl. Probab.}, 623-632.

\bibitem{sempi10}
Sempi, C. (2010).
\newblock Coupled brownian motion.
\newblock In {\em Combining Soft Computing and Statistical Methods in Data
  Analysis}, pp. 569--574. Springer.

\bibitem{sklar59}
Sklar, M. (1959).
\newblock {\em Fonctions de r{\'e}partition {\`a} n dimensions et leurs
  marges}.
\newblock Universit{\'e} Paris 8.

\end{thebibliography}
\end{document}